\newtheorem{theorem}{Theorem}[section]
\newtheorem{lemma}[theorem]{Lemma}
\newtheorem{proposition}[theorem]{Proposition}
\newtheorem{corollary}[theorem]{Corollary}
\newtheorem{conjecture}[theorem]{Conjecture}
\theoremstyle{definition}
\newtheorem{definition}[theorem]{Definition}
\newtheorem{example}[theorem]{Example}
\newtheorem{remark}[theorem]{Remark}
\newenvironment{proofclosed}{\noindent {\bf Proof of Theorem \ref{closed}:}}{\qed \par}
\newenvironment{proofz}{\noindent {\bf Proof of Theorem \ref{z-geom}:}}{\qed \par}
\newcommand{\excise}[1]{}
\renewcommand{\dim}{\operatorname{dim}}
\newcommand{\rk}{\operatorname{rk}}
\newcommand{\crk}{\operatorname{crk}}
\renewcommand{\and}{\qquad\text{and}\qquad}
\newcommand{\Ind}{\operatorname{Ind}}
\newcommand{\triv}{\operatorname{triv}}
\newcommand{\ch}{\operatorname{ch}}
\newcommand{\Z}{\mathbb{Z}}
\newcommand{\C}{\mathbb{C}}
\newcommand{\IC}{\operatorname{IC}}
\renewcommand{\H}{\operatorname{H}}
\newcommand{\IH}{\operatorname{IH}}
\newcommand{\cI}{\mathcal{I}}
\newcommand{\Ql}{\overline{\mathbb{Q}}_\ell}
\renewcommand{\a}{\alpha}
\renewcommand{\b}{\beta}
\newcommand{\pon}{(\mathbb{P}^1_k)^\cI}
\newcommand{\cO}{\mathcal{O}}
\begin{document}
\spacing{1.2}
\noindent{\LARGE\bf The \boldmath{$Z$}-polynomial of a matroid
}\\

\noindent{\bf Nicholas Proudfoot, Yuan Xu, and Benjamin Young}\\
Department of Mathematics, University of Oregon,
Eugene, OR 97403\\

{\small
\begin{quote}
\noindent {\em Abstract.} We introduce the $Z$-polynomial of a matroid, which we define in terms of the Kazhdan-Lusztig polynomial.
We then exploit a symmetry of the $Z$-polynomial to derive a new recursion for Kazhdan-Lusztig coefficients.  We solve
this recursion, obtaining a closed formula for Kazhdan-Lusztig coefficients as alternating sums of multi-indexed Whitney numbers.
For realizable matroids, we give a cohomological interpretation of the $Z$-polynomial in which the symmetry is a manifestation
of Poincar\'e duality.
\end{quote} }

\section{Introduction}
The Kazhdan-Lusztig polynomial $P_M(t)$ of a matroid $M$ was introduced by Elias, Wakefield, and the first author in \cite{EPW}.
This invariant has shown itself to be surprisingly rich, with many beautiful properties (most of them still conjectural).
For example, the coefficients of $P_M(t)$ are conjecturally non-negative; in the case where $M$ is realizable, this is proved
by interpreting the coefficients as intersection cohomology Betti numbers of the reciprocal plane of the realization \cite[Theorem 3.10]{EPW}.
The polynomial $P_M(t)$ is conjecturally log concave \cite[Conjecture 2.5]{EPW} and, even stronger, real rooted \cite[Conjecture 3.2]{kl-survey}.
Furthermore, if $M'$ is obtained from $M$ by contracting a single element, the roots of $P_{M'}(t)$ are conjectured to interlace with those of $P_M(t)$
\cite[Remark 3.5]{kl-survey}.  

If the matroid $M$ has a finite symmetry group $\Gamma$, then one can study the equivariant Kazhdan-Lusztig polynomial
$P_M^\Gamma(t)$ \cite{GPY}, whose coefficients are virtual representations of $\Gamma$ with dimension equal to the coefficients of $P_M(t)$.
In the case where $M$ is equivariantly realizable over the complex numbers, 
the same cohomological interpretation allows us to prove that the coefficients are honest representations \cite[Corollary 2.12]{GPY}.
The equivariant polynomial $P_M^\Gamma(t)$ is conjectured to be equivariantly log concave \cite[Conjecture 5.3(2)]{GPY}.

Despite all of the surprising structure that these polynomials are conjectured to have, 
very few examples are completely understood.  Kazhdan-Lusztig polynomials of 
thagomizer matroids coincide with Dyck path polynomials \cite[Theorem 1.1(1)]{thag}, and Kazhdan-Lusztig polynomials of 
fan matroids conjecturally coincide with Motzkin polynomials \cite{Gedeon-thesis}.
The equivariant Kazhdan-Lusztig coefficients of uniform matroids
have been computed explicitly \cite[Theorem 3.1]{GPY} and shown to admit the structure of finitely generated FI-modules.  
In contrast, the equivariant Kazhdan-Lusztig coefficients of braid matroids admit the structure of finitely generated $\operatorname{FS^{op}}$-modules
\cite[Theorem 6.1]{fs-braid}, and no explicit formula has appeared.  Indeed, the problem of computing Kazhdan-Lusztig coefficients of braid matroids
was the main motivation for this work.\\

In this paper we introduce the $Z$-polynomial $Z_M(t)$, which is defined as a weighted sum of the Kazhdan-Lusztig polynomials of all possible contractions of $M$.
The $Z$-polynomial is palindromic (Proposition \ref{palindrome}), reflecting the fact that, when $M$ is realizable, the coefficients of $Z_M(t)$ may be interpreted
as intersection cohomology Betti numbers of a {\em projective} variety (Theorem \ref{z-geom}), for which Poincar\'e duality holds.

Surprisingly, this symmetry of the $Z$-polynomial translates into a recursive formula for Kazhdan-Lusztig coefficients that is different from any of the recursive
formulas seen before (Corollary \ref{recurse}).  In particular, it yields a method for computing Kazhdan-Lusztig coefficients of braid matroids 
that is much faster than any previously available approach.  Furthermore, we are able to use this recursion to obtain a formula that expresses
each Kazhdan-Lusztig coefficient of $M$ as a finite alternating sum of multi-indexed Whitney numbers (Theorem \ref{closed}).
In the case of braid matroids, this becomes a finite alternating sum of products of Stirling numbers of the second kind (Corollary \ref{nice closed}).
We also obtain an equivariant version of our formula (Theorem \ref{closed-eq}), which takes a particularly nice form for uniform matroids (Proposition \ref{uni-eq}).

Our Theorem \ref{closed} bears a close resemblance to a recent result of Wakefield \cite[Theorem 5.1]{Wak}, who also obtained a formula for Kazhdan-Lusztig
coefficients as alternating sums of multi-indexed Whitney numbers.  It is likely that our formula is equivalent to Wakefield's, but the combinatorics
involved in the two formulas are very different; see Remark \ref{Wak} for further discussion of this point.\\

Our paper is structured as follows.  Section \ref{sec:def} contains the definition of the $Z$-polynomial, the proof of panlindromicity, and the recursion
for Kazhdan-Lusztig coefficients that follows from this symmetry.  Section \ref{sec:KL} uses this recursion to derive the formula for Kazhdan-Lusztig coefficients
in terms of multi-indexed Whitney numbers.  Section \ref{sec:nice} interprets these results in the case where we have a family of matroids that is closed
under contractions, such as braid matroids or uniform matroids.  
One of the results of this section is that Narayana polynomials are special cases of $Z$-polynomials (Proposition \ref{narayana}).
Section \ref{sec:roots} contains conjectures about the roots of the $Z$-polynomial,
analogous to the conjectures in \cite{kl-survey} about the roots of the Kazhdan-Lusztig polynomial.
Section \ref{sec:eq} explains how to extend our results and conjectures to the equivariant setting.

Finally, Section \ref{sec:geom} contains the cohomological interpretation of the $Z$-polynomial.  This section provides the key motivation for the definition
of the $Z$-polynomial, so in some sense it ought to appear at the very beginning of the paper.  However, the methods used Section \ref{sec:geom} are quite technical,
in contrast with the elementary and purely combinatorial methods employed in the rest of the paper, so we relegated it to the end.

\vspace{\baselineskip}
\noindent
{\em Acknowledgments:}
The authors are grateful to Sara Billey for originally suggesting the study of the $Z$-polynomial, and to Katie
Gedeon and Max Wakefield for discussions regarding the relationship between Theorem 3.3 of this paper and \cite[Theorem 5.1]{Wak}.
NP is supported by NSF grant DMS-1565036.  YX is supported by NSF grant DMS-1510296.

\section{Definition and palindromicity}\label{sec:def}
Let $M$ be a matroid on the ground set $\cI$, and let $L$ be the lattice of flats of $M$.
Given a flat $F\in L$, let $M_F$ be the {\bf localization} of $M$ at $F$; this is the matroid on the ground set $F$ whose lattice of flats is isomorphic
to $L_F := \{G\in L\mid G\leq F\}$.  Dually, let $M^F$ be the {\bf contraction} of $M$ at $F$; this is the matroid on the ground
set $\mathcal{I}\smallsetminus F$ whose lattice of flats is isomorphic to $L^F := \{G\in L\mid G\geq F\}$.
For any flat $F$, we have the {\bf rank} $\rk F := \rk M_F$ and the {\bf corank} $\crk F := \rk M^F = \rk M - \rk F$.

Let $\chi_M(t) \in\Z[t]$ be the characteristic polynomial of $M$, and let
$P_M(t) \in \Z[t]$ be the Kazhdan-Lusztig polynomial of $M$, as defined in \cite[Theorem 2.2]{EPW}.  The Kazhdan-Lusztig polynomial is characterized
by the following three properties:
\begin{itemize}
\item If $\rk M = 0$, then $P_M(t) = 1$.
\item If $\rk M > 0$, then $\deg P_M(t) < \tfrac{1}{2}\rk M$.
\item For every $M$, $\displaystyle t^{\rk M} P_M(t^{-1}) = \sum_{F}\chi_{M_F}(t) P_{M^F}(t).$
\end{itemize}


\begin{definition}\label{PtoZ}
For any matroid $M$, we define the {\bf \boldmath{$Z$}-polynomial}
$$Z_M(t) := \sum_F t^{\rk F} P_{M^F}(t).$$
\end{definition}

\begin{lemma}\label{ZtoP}
We have $$P_M(t) = \sum_F\mu(\emptyset,F)t^{\rk F} Z_{M^F}(t),$$
where $\mu:L\times L\to \Z$ is the M\"obius function.
\end{lemma}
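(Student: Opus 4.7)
The plan is to prove the identity by Möbius inversion, using the definition of $Z_{M^F}(t)$ and interchanging the order of summation.

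First, I would unpack $Z_{M^F}(t)$ according to Definition \ref{PtoZ}. The flats of the contraction $M^F$ are in order-preserving bijection with the interval $L^F = \{G \in L \mid G \geq F\}$, via $G \mapsto G \smallsetminus F$; under this bijection, the rank of $G \smallsetminus F$ in $M^F$ is $\rk G - \rk F$, and the contraction $(M^F)^{G \smallsetminus F}$ is canonically isomorphic to $M^G$. Substituting this into the definition yields
$$Z_{M^F}(t) = \sum_{G \geq F} t^{\rk G - \rk F} P_{M^G}(t).$$

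Next, I would plug this expression into the right-hand side of the claimed identity and swap the order of summation:
$$\sum_F \mu(\emptyset, F) t^{\rk F} Z_{M^F}(t) = \sum_F \sum_{G \geq F} \mu(\emptyset, F) t^{\rk G} P_{M^G}(t) = \sum_G t^{\rk G} P_{M^G}(t) \sum_{F \leq G} \mu(\emptyset, F).$$
The inner sum $\sum_{F \leq G} \mu(\emptyset, F)$ is, by the defining property of the Möbius function, equal to $\delta_{\emptyset, G}$. So only the term $G = \emptyset$ survives, and since $M^\emptyset = M$ with $\rk \emptyset = 0$, the right-hand side collapses to $P_M(t)$, as desired.

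There is really no hard step here: the proof is a two-line Möbius-inversion computation, and the only thing that requires any care is correctly identifying the flats, ranks, and iterated contractions of $M^F$ so that the substitution in the first step is valid. Once that identification is in place, the swap of summations and the vanishing property of $\mu$ finish the argument.
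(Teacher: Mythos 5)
Your proof is correct and is essentially identical to the paper's: both unpack $Z_{M^F}(t)$ via the identification of flats of $M^F$ with the interval $L^F$, swap the order of summation, and invoke $\sum_{F\leq G}\mu(\emptyset,F)=\delta(\emptyset,G)$ to collapse the sum to $P_M(t)$. The extra care you take in justifying the identification of flats, ranks, and iterated contractions of $M^F$ is implicit in the paper's setup but is a reasonable thing to spell out.
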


\begin{proof}
We have
$$\sum_F\mu(\emptyset,F)t^{\rk F} Z_{M^F}(t) =
\sum_F\mu(\emptyset,F)t^{\rk F} \sum_{F\leq G} t^{\rk G - \rk F} P_{M^G}(t)
= \sum_G t^{\rk G}P_{M^G}(t) \sum_{F\leq G} \mu(\emptyset,F).$$
Since $\sum_{F\leq G} \mu(\emptyset,F) = \delta(\emptyset,G)$, this is equal to $t^{\rk\emptyset}P_{M^\emptyset}(t) = P_M(t)$.
\end{proof}

\begin{proposition}\label{palindrome}
For any matroid $M$, $Z_M(t)$ is 
palindromic of degree $\rk M$.  That is, $$t^{\rk M}Z_M(t^{-1}) = Z_M(t).$$
\end{proposition}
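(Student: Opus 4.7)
The plan is to derive the palindromicity from the defining functional equation for the Kazhdan-Lusztig polynomial (the third bullet in the characterization of $P_M$ recalled above), applied not to $M$ itself but to each contraction $M^F$ appearing in the definition of $Z_M$. The key combinatorial input, beyond manipulation of double sums, will be an identity that sums characteristic polynomials of intervals in $L$ to a single monomial.

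First, I compute directly from the definition:
$$t^{\rk M} Z_M(t^{-1}) = \sum_F t^{\rk M - \rk F} P_{M^F}(t^{-1}) = \sum_F t^{\rk M^F} P_{M^F}(t^{-1}),$$
using $\rk M - \rk F = \crk F = \rk M^F$. Then I apply the third defining property to each $P_{M^F}(t^{-1})$. The flats of $M^F$ correspond bijectively to flats $G \geq F$ in $L$; under this correspondence, the contraction $(M^F)^{G \setminus F}$ is $M^G$, and the localization $(M^F)_{G \setminus F}$ is the matroid whose lattice of flats is the interval $[F,G] \subseteq L$. Writing $\chi_{[F,G]}(t)$ for the characteristic polynomial of this localization, the functional equation turns the display into
$$t^{\rk M} Z_M(t^{-1}) = \sum_F \sum_{G \geq F} \chi_{[F,G]}(t)\, P_{M^G}(t) = \sum_G P_{M^G}(t) \sum_{F \leq G} \chi_{[F,G]}(t).$$

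Comparing with $Z_M(t) = \sum_G t^{\rk G} P_{M^G}(t)$, the proof reduces to the identity
$$\sum_{F \leq G} \chi_{[F,G]}(t) = t^{\rk G}$$
holding for every flat $G$. To prove this, I expand $\chi_{[F,G]}(t) = \sum_{F \leq H \leq G} \mu(F,H)\, t^{\rk G - \rk H}$, swap the order of summation over $F$ and $H$, and invoke the standard Möbius identity $\sum_{\emptyset \leq F \leq H} \mu(F,H) = \delta_{\emptyset, H}$. This collapses the double sum to the single term $t^{\rk G}$ contributed by $H = \emptyset$, completing the proof.

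The only potential obstacle is the bookkeeping that identifies $(M^F)^{G\setminus F} = M^G$ and $(M^F)_{G \setminus F}$ with the interval matroid $[F,G]$, which is immediate from the description of flats of a contraction; the Möbius step is routine. In particular, no geometric input or Poincaré duality is needed for this combinatorial proof, even though Theorem~\ref{z-geom} will later explain the palindromicity conceptually in the realizable case.
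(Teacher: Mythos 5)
Your proof is correct and follows essentially the same route as the paper's: rewrite $t^{\rk M}Z_M(t^{-1})$ as $\sum_F t^{\rk M^F}P_{M^F}(t^{-1})$, apply the defining functional equation to each $P_{M^F}(t^{-1})$, swap the order of summation, and collapse the inner sum $\sum_{F\leq G}\chi_{[F,G]}(t)$ to $t^{\rk G}$. The only difference is cosmetic: the paper cites that last collapse as a known identity, while you supply the short M\"obius-function derivation of it.
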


\begin{proof}
We have
\begin{eqnarray*}
t^{\rk M}Z_M(t^{-1}) &=& t^{\rk M} \sum_F t^{-\rk F} P_{M^F}(t^{-1})\\
&=& \sum_F t^{\rk M^F} P_{M^F}(t^{-1})\\
&=& \sum_F \sum_{F\leq G} \chi_{M^F_G}(t) P_{M^G}(t)\\
&=& \sum_G P_{M^G}(t) \sum_{F\leq G} \chi_{M^F_G}(t)\\
&=& \sum_G P_{M^G}(t)\, t^{\rk M_G}\\
&=& Z_M(t).
\end{eqnarray*}
This completes the proof.
\end{proof}

\begin{remark}
In Section \ref{sec:geom}, we will give a geometric interpretation of the $Z$-polynomial of a realizable matroid,
and in this context Proposition \ref{palindrome} can be interpreted as Poincar\'e duality (see Remark \ref{pd}).
\end{remark}

Despite the simplicity of the proof, 
Proposition \ref{palindrome} implies a previously unknown recursive formula for Kazhdan-Lusztig coefficients.
Let $c_M(i)$ and $z_M(i)$ denote the coefficients of $t^i$ in $P_M(t)$ and $Z_M(t)$, respectively.

\begin{corollary}\label{recursion}
For any matroid $M$ and natural number $i$, we have
$$c_{M}(i) = \sum_{F} c_{M^F}(\crk F - i) - \sum_{F\neq\emptyset} c_{M^F}(i-\rk F).$$
\end{corollary}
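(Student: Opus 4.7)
The plan is to extract the coefficient of $t^i$ from both sides of the palindromicity identity $t^{\rk M} Z_M(t^{-1}) = Z_M(t)$ established in Proposition \ref{palindrome}, using the definition $Z_M(t) = \sum_F t^{\rk F} P_{M^F}(t)$ from Definition \ref{PtoZ}.

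First, I would unpack the right-hand side: $[t^i] Z_M(t) = \sum_F [t^{i-\rk F}] P_{M^F}(t) = \sum_F c_{M^F}(i - \rk F)$. Then I would split off the $F = \emptyset$ term; since $\rk \emptyset = 0$ and $M^\emptyset = M$, this contributes exactly $c_M(i)$, leaving $\sum_{F\neq\emptyset} c_{M^F}(i - \rk F)$ from the remaining flats.

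Next, I would unpack the left-hand side. Pulling $t^{\rk M}$ through the sum gives $t^{\rk M} Z_M(t^{-1}) = \sum_F t^{\rk M - \rk F} P_{M^F}(t^{-1}) = \sum_F t^{\crk F} P_{M^F}(t^{-1})$. Since $P_{M^F}(t^{-1}) = \sum_j c_{M^F}(j)\, t^{-j}$, the coefficient of $t^i$ in $t^{\crk F} P_{M^F}(t^{-1})$ is $c_{M^F}(\crk F - i)$, so altogether $[t^i]\, t^{\rk M} Z_M(t^{-1}) = \sum_F c_{M^F}(\crk F - i)$.

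Finally, equating the two coefficient expressions and solving for $c_M(i)$ yields the stated recursion. There is essentially no obstacle here beyond bookkeeping with the indices $\rk F$ and $\crk F = \rk M - \rk F$; the only subtlety is remembering that the $F = \emptyset$ contribution to the $\rk F$ sum is precisely $c_M(i)$ itself, which is what allows the identity to be rearranged into a recursion expressing $c_M(i)$ in terms of Kazhdan-Lusztig coefficients of proper contractions $M^F$ for $F \neq \emptyset$ (together with the values $c_{M^F}(\crk F - i)$, which for $F = \emptyset$ involves only $c_M(\rk M - i)$ and for other $F$ involves strictly smaller matroids).
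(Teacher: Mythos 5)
Your proposal is correct and is essentially the same argument as the paper's: both extract the coefficient of $t^i$ from the palindromicity identity of Proposition \ref{palindrome}, use Definition \ref{PtoZ} to identify $z_M(i)=\sum_F c_{M^F}(i-\rk F)$ and $z_M(\rk M - i)=\sum_F c_{M^F}(\crk F - i)$, and isolate the $F=\emptyset$ term on one side. The paper just phrases this more tersely via $z_M(i)=z_M(\rk M - i)$ rather than spelling out the $t\mapsto t^{-1}$ substitution, but the bookkeeping is identical.
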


\begin{proof}
We have
$$\sum_F c_{M^F}(i-\rk F) = z_M(i) = z_M(\rk M -i) = \sum_F c_{M^F}(\crk F - i).$$
Isolating the first term in the left-hand sum, we obtain the desired equation.
\end{proof}

\begin{remark}\label{nonempty}
Suppose that $2i < \rk M$, which is a necessary condition for $c_M(i)$ to be nonzero provided that $\rk M > 0$.
Then the $F=\emptyset$
term vanishes from the first sum, and we in fact have
$$c_M(i) = \sum_{F\neq\emptyset} c_{M^F}(\crk F - i) - \sum_{F\neq\emptyset} c_{M^F}(i-\rk F).$$
Furthermore, if $i>0$, then $c_{M^F}(\crk F - i) = 0$ unless $\crk F < 2i$, which means that $\crk F - i < i$.
This tells us that our recursion expresses $c_M(i)$ in terms of other Kazhdan-Lusztig coefficeints $c_N(j)$
where $j$ is strictly smaller than $i$ {\bf and} $N$ has strictly smaller rank than $M$.
\end{remark}

\section{Kazhdan-Lusztig coefficients and Whitney numbers}\label{sec:KL}
In this section we will regard $c(i)$ as a function that takes as input a matroid and produces as output an integer.
As we observed in Remark \ref{nonempty}, the function $c(i)$ can be expressed recursively in terms of the functions
$c(0),\ldots,c(i-1)$.  If we iterate this procedure $i$ times, we obtain an expression for $c(i)$ that does not involve any 
Kazhdan-Lusztig coefficients except for $c(0)$, which is the constant function with value 1 \cite[Proposition 2.11]{EPW}.
This is exactly what we do in this section.

Given a sequence $i_r,\ldots,i_1$ of integers and a matroid $M$ with lattice of flats $L$, we define the {\bf \boldmath{$r$}-Whitney number}
$$W_M(i_r,\ldots,i_1) := \Big{|}\big\{(F_r,\ldots,F_1)\in L^r\mid F_r\leq\cdots\leq F_r\;\;\text{and $\crk F_j = i_j$ for all $j$}\big\}\Big{|}.$$
We will usually just write $W(i_r,\ldots,i_1)$, which we regard as a function that takes matroids to numbers.
For example, $W(i)$ is the function that counts the number of flats of corank $i$, while $W(i_2,i_1)$ is the function 
that counts the number of pairs of comparable flats with coranks $i_2$ and $i_1$.

\begin{remark}
Our conventions differ from the usual ones in that we index our Whitney numbers
by corank rather than rank; this will make Theorem \ref{closed} significantly simpler to state.
\end{remark}

\begin{lemma}\label{recurse}
Let $M$ be a matroid and $i_r,\ldots,i_1$ a sequence of integers.
Then $$W_M(i_r,\ldots,i_1) = \sum_{\crk F = i_r}W_{M^F}(i_{r-1},\ldots,i_{1}).$$
\end{lemma}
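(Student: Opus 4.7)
The plan is to prove the identity by a straightforward partitioning argument, grouping the chains counted by $W_M(i_r,\ldots,i_1)$ according to their bottom element $F_r$.

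First, I would fix a flat $F\in L$ with $\crk F = i_r$ and count those chains $F_r\leq F_{r-1}\leq\cdots\leq F_1$ contributing to $W_M(i_r,\ldots,i_1)$ that satisfy $F_r = F$. These are exactly the chains $F\leq F_{r-1}\leq\cdots\leq F_1$ in the upper interval $L^F = \{G\in L\mid G\geq F\}$ subject to $\crk F_j = i_j$ for $j = 1,\ldots,r-1$. Summing over all admissible $F$ will give the right-hand side once I identify this count with $W_{M^F}(i_{r-1},\ldots,i_1)$.

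The key step is to check compatibility of coranks under the isomorphism between $L^F$ and the lattice of flats of $M^F$ recalled in Section \ref{sec:def}. Under this isomorphism, a flat $G\geq F$ in $M$ corresponds to a flat in $M^F$ of rank $\rk G - \rk F$. Since $\rk M^F = \rk M - \rk F$, the corank of the corresponding flat in $M^F$ is
$$(\rk M - \rk F) - (\rk G - \rk F) = \rk M - \rk G = \crk G.$$
Thus corank is preserved, and the chains $F\leq F_{r-1}\leq\cdots\leq F_1$ in $L^F$ with $\crk F_j = i_j$ biject with the chains in the lattice of flats of $M^F$ counted by $W_{M^F}(i_{r-1},\ldots,i_1)$.

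There is no real obstacle here; the only thing to verify carefully is the corank computation above, which is why indexing Whitney numbers by corank (as flagged in the preceding remark) makes the statement clean. Combining the two observations yields
$$W_M(i_r,\ldots,i_1) = \sum_{\crk F = i_r} W_{M^F}(i_{r-1},\ldots,i_1),$$
as desired.
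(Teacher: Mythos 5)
Your proof is correct and follows exactly the argument the paper has in mind; the paper simply asserts the lemma is ``immediate from our description of the lattice of flats of $M^F$,'' and your partitioning by the bottom flat $F_r$ together with the corank-preservation check is the substance behind that one-line remark.
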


\begin{proof}
This is immediate from our description of the lattice of flats of $M^F$.
\end{proof}

Given positive integers $i$ and $r$ along with a subset $S\subset [r]$, let 
$$t_j(S):= \min\{\, k \mid k\geq j\;\;\text{and}\;\; k\notin S\} \in \{1,\ldots,r+1\}.$$

\begin{theorem}\label{closed}
For all $i>0$, we have
$$c(i) = \sum_{r=1}^i\;\sum_{S\subset [r]} (-1)^{|S|} \sum_{\substack{a_0<a_1<\cdots < a_r<a_{r+1}\\ a_0 = 0 \\ a_r = i\\ a_{r+1} = \rk-i}}
 W\Big(a_{t_{r}(S)} + a_{r-1},\ldots,a_{t_{1}(S)} + a_{0}\Big).$$
\end{theorem}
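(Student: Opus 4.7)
The plan is to proceed by strong induction on $i$, using Corollary \ref{recursion} (as refined by Remark \ref{nonempty}) to reduce each Kazhdan-Lusztig coefficient to a signed combination of coefficients at strictly smaller index. The base case $i=1$ is immediate: the recursion yields $c_M(1) = W_M(1) - W_M(\rk M - 1)$, which matches the only (namely $r=1$) contribution of the formula.

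For the inductive step, I would apply the recursion once to obtain
$$c_M(i) = \sum_{F \neq \emptyset} c_{M^F}(\crk F - i) - \sum_{F \neq \emptyset} c_{M^F}(i - \rk F),$$
and then segregate the terms whose new Kazhdan-Lusztig index is zero: these occur exactly when $\crk F = i$ (contributing $W_M(i)$) or $\rk F = i$ (contributing $-W_M(\rk M - i)$), which together form the $r=1$ summand of the target formula. For the remaining terms I would call the new index $a_{r-1} \in \{1,\ldots,i-1\}$ and set $a_r := i$ and $a_{r+1} := \rk M - i$; then $\crk F$ equals $a_{r-1} + a_r$ in the positive sum and $a_{r-1} + a_{r+1}$ in the negative sum, so both cases can be packaged as $\crk F = a_{r-1} + a_{t_r(S)}$ by allowing $r \in S$ to track the negative case.

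Next, I would invoke the inductive hypothesis on $c_{M^F}(a_{r-1})$, expressing it as an alternating sum of Whitney numbers of $M^F$ indexed by $r' \in \{1, \ldots, a_{r-1}\}$, $S' \subset [r']$, and a chain $0 = b_0 < b_1 < \cdots < b_{r'} = a_{r-1} < b_{r'+1} = \crk F - a_{r-1}$. The key observation is that, having fixed $\crk F = a_{r-1} + a_{t_r(S)}$, we get $b_{r'+1} = a_{t_r(S)}$, so the $b$-sequence dovetails with the outer $a$-sequence under the identifications $r = r' + 1$ and $b_j = a_j$ for $j \leq r-1$. Finally, Lemma \ref{recurse} collapses the sum over $F$ of fixed corank into a Whitney number of $M$ with that corank prepended as the leftmost argument, producing exactly the desired $W_M\big(a_{t_r(S)} + a_{r-1}, \ldots\big)$.

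The main obstacle will be the combinatorial bookkeeping: verifying that $b_{t_j(S')} = a_{t_j(S)}$ holds for all $j \leq r - 1$ under the correspondence $S = S'$ (when $r \notin S$) or $S = S' \cup \{r\}$ (when $r \in S$). This reduces to a short case analysis on whether $t_j(S')$ equals $r'$ or is strictly smaller: in the former case, the new element $r \in S$ forces $t_j(S) = r+1$, which agrees with $b_{r'+1} = a_{r+1}$; in the latter case, $S$ and $S'$ agree on $\{1, \ldots, t_j(S')\}$. The signs combine cleanly since $|S| = |S'| + [r \in S]$, so $(-1)^{|S'|}$ times the outer sign of the recursion equals $(-1)^{|S|}$. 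Combined with the $r=1$ base contribution above, these verifications recover the full sum in the target formula and close the induction.
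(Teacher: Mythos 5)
Your proof is correct and takes essentially the same route as the paper's: induct on $i$, apply the recursion from Corollary \ref{recursion}, invoke the inductive hypothesis on the inner coefficients, collapse the sum over flats of fixed corank with Lemma \ref{recurse}, and reconcile the bookkeeping by passing from $S' \subset [r-1]$ to $S \subset [r]$ according to the sign of the outer term. The only cosmetic difference is that you explicitly peel off the $c_{M^F}(0) = 1$ terms (flats with $\crk F = i$ or $\rk F = i$) as the $r=1$ contribution before invoking the inductive hypothesis, whereas the paper absorbs them by allowing the inner sum over $r$ to include $r = 0$ (as permitted by Remark \ref{conventions}) and then reindexing $s = r+1$.
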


\begin{remark}\label{conventions}
If we try to compute $c_M(i)$ for a matroid $M$ that does not satisfy the inequality $2i<\rk M$, then the sum will
be empty, because the condition $i = a_r < a_{r+1} = \rk - i$ is not satisfied.  We will therefore obtain the number zero, which is what we expect.
Similarly, we can replace the sum over $r$ from 1 to $i$ with a sum over all $r$, because the conditions $0=a_0<\cdots < a_r = i$ can only be satisfied
if $1\leq r \leq i$.
\end{remark}

\begin{remark}
Assuming that we are evaluating this function on a matroid whose rank is greater than $2i$, the number of tuples $(a_0,\ldots,a_{r+1})$ satisfying
the given conditions is equal to the number of compositions of $i$ into $r$ parts, which is in turn equal to the binomial coefficient $\binom{i-1}{r-1}$.
Thus the total number of terms in our expression for $c(i)$ is equal to
$$\sum_{r=1}^i 2^r \binom{i-1}{r-1} = 2\sum_{s=0}^{i-1} 2^s \binom{i-1}{s} = 2(1+2)^{i-1} = 2\cdot 3^{i-1}.$$
\end{remark}

\begin{remark}\label{Wak}
Theorem \ref{closed} bears a strong similarity
to \cite[Theorem 5.1]{Wak}, where $c(i)$ is also expressed as an alternating sum of $r$-Whitney numbers.
It seems likely that there is a bijection between our index set and Wakefield's index set that makes the signed Whitney numbers in our formula match with those in his.  However, this bijection is not at all obvious; in particular, it is not even clear to us 
how to compute the size of Wakefield's index set for
general $i$.  Using a computer, Gedeon determined that the index sets do have the same size when $i\leq 4$.
\end{remark}
\vspace{\baselineskip}

\begin{proofclosed}
We induct on $i$.  When $i=1$, our formula says
$$c(1) = \sum_{S\subset [1]} (-1)^{|S|}\, W\big(a_{t_1(S)} + a_0\big).$$
We have $t_1(\emptyset) = 1$ and $t_1([1]) = 2$, so this says 
$c(1) = W(1) - W(\rk - 1)$, which was proved in \cite[Proposition 2.12]{EPW}.

Now assume that our formula holds for all $j<i$.  
Fix a matroid $M$.  By Remark \ref{nonempty}, we may assume that $2i < \rk M$, for otherwise $c_M(i) = 0$ and the sum is empty.
By Remarks \ref{nonempty} and \ref{conventions}, we have
\begin{eqnarray*}c_M(i) &=& \sum_{F\neq\emptyset} c_{M^F}(\crk F - i) - \sum_{F\neq\emptyset} c_{M^F}(i-\rk F)\\\\
&=& \sum_{F\neq\emptyset} \sum_{r}\;\sum_{S\subset [r]} (-1)^{|S|} \sum_{\substack{a_0<a_1<\cdots < a_r<a_{r+1}\\ a_0 = 0 \\ a_r = \crk F - i\\ a_{r+1} = i}}
W_{M^F}\Big(a_{t_{r}(S)} + a_{r-1},\ldots,a_{t_{1}(S)} + a_{0}\Big)\\
&-& \sum_{F\neq\emptyset} \sum_{r}\;\sum_{S\subset [r]} (-1)^{|S|} \sum_{\substack{a_0<a_1<\cdots < a_r<a_{r+1}\\ a_0 = 0 \\ a_r = i - \rk F\\ a_{r+1} = \rk M - i}}
W_{M^F}\Big(a_{t_{r}(S)} + a_{r-1},\ldots,a_{t_{1}(S)} + a_{0}\Big).
\end{eqnarray*}
We can simplify these expressions by first fixing the corank of $F$ to be some number $k$ and then applying Lemma \ref{recurse}.  This gives
us the formula
\begin{eqnarray*}c_M(i) 
&=& \sum_{r}\;\sum_{S\subset [r]} (-1)^{|S|}\sum_{k=0}^{\rk M - 1} \sum_{\substack{a_0<a_1<\cdots < a_r<a_{r+1}\\ a_0 = 0 \\ a_r = k - i\\ a_{r+1} = i}}
W_{M}\Big(k, a_{t_{r}(S)} + a_{r-1},\ldots,a_{t_{1}(S)} + a_{0}\Big)\\
&-& \sum_{r}\;\sum_{S\subset [r]} (-1)^{|S|} \sum_{k=0}^{\rk M - 1}\sum_{\substack{a_0<a_1<\cdots < a_r<a_{r+1}\\ a_0 = 0 \\ a_r = i + k - \rk M\\ a_{r+1} = \rk M - i}}
W_{M}\Big(k, a_{t_{r}(S)} + a_{r-1},\ldots,a_{t_{1}(S)} + a_{0}\Big).
\end{eqnarray*}
Next, we eliminate $k$ from both sums by observing that $k = a_{r+1}+a_r = a_{t_{r+1}(S)} + a_r$, and the inequality $k < \rk M$ turns into an inequality involving $a_r$.  
In the first sum, we get the inequality $a_r < \rk M - i$, but this is implied by the fact that $a_r < a_{r+1} = i < \rk M - i$.
In the second sum, we get the inequality $a_r < i$, which is not implied by the other conditions.  Thus we have
\begin{eqnarray*}c_M(i) 
&=& \sum_{r}\;\sum_{S\subset [r]} (-1)^{|S|} \sum_{\substack{a_0<a_1<\cdots < a_r<a_{r+1}\\ a_0 = 0 \\ a_{r+1} = i}}
W_{M}\Big(a_{t_{r+1}(S)} + a_r, a_{t_{r}(S)} + a_{r-1},\ldots,a_{t_{1}(S)} + a_{0}\Big)\\
&-& \sum_{r}\;\sum_{S\subset [r]} (-1)^{|S|} \sum_{\substack{a_0<a_1<\cdots < a_r<a_{r+1}\\ a_0 = 0 \\ a_r < i\\ a_{r+1} = \rk M - i}}
W_{M}\Big(a_{t_{r+1}(S)} + a_r, a_{t_{r}(S)} + a_{r-1},\ldots,a_{t_{1}(S)} + a_{0}\Big).
\end{eqnarray*}

We now proceed to reindex the two sums.
Given a natural number $r$ and a subset $S\subset [r]$, let $S_0 := S$ and $S_1 := S\cup\{r+1\}$, both regarded as subsets of $[r+1]$.
Then $$t_j(S_0) = \min\{\, k \mid k\geq j\;\;\text{and}\;\; k\notin S_0\} = \min\{\, k \mid k\geq j\;\;\text{and}\;\; k\notin S\} = t_j(S)$$ for all $j$, so we can replace $S$ with $S_0$ in the first sum.
On the other hand,
$$t_j(S_1) = \begin{cases} t_j(S)\;\;\text{if $t_j(S)\leq r$}\\ r+2\;\;\text{if $t_j(S)=r+1$}.\end{cases}$$
Let $b_j = a_j$ for $j\leq r$, $b_{r+1} = i$, and $b_{r+2} = \rk M - i$.  Then $a_{t_j(S)} = b_{t_j(S_1)}$, and the second sum becomes
$$\sum_{r}\;\sum_{S\subset [r]} (-1)^{|S_1|} \sum_{\substack{b_0<b_1<\cdots <b_{r+1}<b_{r+2}\\ b_0 = 0 \\ b_{r+1} = i\\ b_{r+2} = \rk M - i}}
W_{M}\Big(b_{t_{r+1}(S_1)} + b_r, b_{t_{r}(S_1)} + b_{r-1},\ldots,b_{t_{1}(S_1)} + b_{0}\Big).$$
(Note that, by replacing $(-1)^{|S|}$ with $(-1)^{|S_1|}$, we have absorbed the external minus sign.)
All together, this gives us
\begin{eqnarray*}c_M(i) 
&=& \sum_{r}\;\sum_{S\subset [r]} (-1)^{|S_0|} \sum_{\substack{a_0<a_1<\cdots < a_{r+1}\\ a_0 = 0 \\ a_{r+1} = i}}
W_{M}\Big(a_{t_{r+1}(S_0)} + a_r, a_{t_{r}(S_0)} + a_{r-1},\ldots,a_{t_{1}(S_0)} + a_{0}\Big)\\
&+& \sum_{r}\;\sum_{S\subset [r]} (-1)^{|S_1|} \sum_{\substack{b_0<b_1<\cdots < b_{r+1}<b_{r+2}\\ b_0 = 0 \\ b_{r+1} = i\\ b_{r+2} = \rk M - i}}
W_{M}\Big(b_{t_{r+1}(S_1)} + b_r, b_{t_{r}(S_1)} + b_{r-1},\ldots,b_{t_{1}(S_1)} + b_{0}\Big).
\end{eqnarray*}
Finally, we observe that summing over all subsets $S\subset [r]$ and then separately considering $S_0$ and $S_1$ is the same
as summing over all subsets of $[r+1]$.  If we now re-index the outer sum by letting $s=r+1$, we obtain the desired formula for $c_M(i)$, and the induction is complete.
\end{proofclosed}

\section{Nice families}\label{sec:nice}
Given two matroids $M$ and $M'$, we will write $M\simeq M'$ if $M$ and $M'$ have isomorphic simplifications,
or (equivalently) if they have isomorphic lattices of flats.  Since the Kazhdan-Lusztig polynomial is defined in terms
of the lattice of flats, we have $P_{M}(t) = P_{M'}(t)$ whenever $M\simeq M'$.

We define a {\bf nice family}  
to be a sequence of matroids $\{M_d\mid d\geq 0\}$
with the property that $\rk M_d = d$ and, for any corank $k$ flat $F$ of $M_d$, we have $M_d^F \simeq M_{k}$.
Examples of nice families include the following.
\begin{enumerate}
\item $M_d$ is the braid matroid of rank $d$.  Equivalently, this is the matroid associated with the complete graph on $d+1$ vertices,
or the matroid associated with the Coxeter arrangement of type $A_d$.  
\item $M_d$ is the matroid associated with the Coxeter arrangement of type $B_d$.  
\item $M_d = U_{m,d}$ is the uniform matroid of rank $d$ on $m+d$ elements, where $m$ is fixed.
\item 
$M_d$ is the matroid represented by all vectors in $\mathbb{F}_q^d$, where $q$ is a fixed prime power.
\end{enumerate}

\begin{remark}
The matroids associated with Coxeter arrangements of type D do not form a nice family.
For such a matroid, the contraction of a flat of rank 1 is no longer a matroid associated with any Coxeter arrangement.
\end{remark}

Fix a nice family.  For ease of notation, we will write $P_d = P_{M_d}$, $Z_d = Z_{M_d}$, $W_d = W_{M_d}$, and $c_d = c_{M_d}$.
Recall that $W_{d}(k)$ is the number of flats of $M_d$ of corank $k$,
and let $w_d(k) := \sum_{\crk F = k}\mu(\emptyset,F)$ be the coefficient of $t^{k}$ in the characteristic polynomial of $M_d$.
Then Definition \ref{PtoZ} and Lemma \ref{ZtoP} tell us that
\begin{equation}\label{PZ} Z_d(t) = \sum_{k=0}^{d} W_d(k) t^{d-k} P_{k}(t)
\and
P_d(t) = \sum_{k=0}^{d} w_d(k) t^{d-k} Z_{k}(t).\end{equation}
In the four families described above, we have the following.
\begin{enumerate}
\item For the braid matroid, $W_d(k) = S(d+1,k+1)$ and $w_d(k) = s(d+1,k+1)$ are Stirling numbers of the second and first kind,
respectively.
\item For the matroid associated with the type $B_d$ Coxeter arrangement,
$$W_k(d) = \sum_{j=k}^d 2^{j-k}\binom{d}{j} S(j,k)
\and w_k(d) = (-1)^{d-k}\sum_{j=k}^d (-2)^{d-j}\binom j k s(d,j).$$
The first formula appears in \cite[Proposition 3]{Suter}.  The second appears in \cite[Sequence A028338]{oeis}, using the fact that the exponents of 
this arrangement are $1, 3, \ldots, 2d-1$.
\item For the uniform matroid $U_{m,d}$,
$$W_d(k) = \begin{cases}\binom{d+m}{k+m} \;\;\,\text{if $k>0$}\\ 
1 \;\;\;\;\;\;\text{if $k=0$}
\end{cases}
\text{and}\;\;\;
w_d(k) = \begin{cases}
(-1)^{d-k}\binom{d+m}{k+m} \qquad\;\;\; \text{if $k>0$ or $d=k=0$}\\
\sum_{j=0}^m (-1)^{d+j} \binom{d+m}{d+j} \;\; \text{if $d>k=0$}.
\end{cases}
$$
\item For the matroid represented by all vectors in $\mathbb{F}_q^d$, $W_d(k) = \displaystyle\binom{d}{k}_{\!\!q}$ and $w_d(k) = (-1)^{d-k}q^{\binom{d-k}{2}}\displaystyle\binom{d}{k}_{\!\!q}.$\\
\end{enumerate}
Corollary \ref{recursion} and Remark \ref{nonempty} translate to the following statement.

\begin{corollary}\label{nice linear}
If $2i < d$, then
$$c_{d}(i) = \sum_{k=0}^{d-1} W_d(k) c_{k}(k-i) - \sum_{k=0}^{d-1} W_d(k) c_{k}(i-d+k).$$
\end{corollary}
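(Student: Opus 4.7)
The plan is to recognize Corollary \ref{nice linear} as a direct specialization of the recursion in Corollary \ref{recursion}, sharpened by Remark \ref{nonempty}, to the setting of a nice family $\{M_d\}$. The two sums on the right-hand side will arise by stratifying the sums over flats by corank and packaging the count of flats of each corank into $W_d(k)$.

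First I would appeal to Remark \ref{nonempty}: since the hypothesis $2i < d = \rk M_d$ is in force, the $F=\emptyset$ term vanishes from the first sum of Corollary \ref{recursion}, so
$$c_{M_d}(i) \;=\; \sum_{F\neq\emptyset} c_{M_d^F}(\crk F - i) \;-\; \sum_{F\neq\emptyset} c_{M_d^F}(i - \rk F).$$
Next I would invoke the defining property of a nice family, namely $M_d^F \simeq M_{\crk F}$ for every flat $F$, together with the fact recalled at the start of the section that each coefficient $c_M(j)$ depends only on the lattice of flats of $M$. This lets me replace $c_{M_d^F}(j)$ by $c_{\crk F}(j)$ throughout.

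Finally I would group terms by $k := \crk F$. The empty flat has $\rk\emptyset=0$ and hence $\crk\emptyset = d$, so the condition $F \neq \emptyset$ is equivalent to $k \in \{0,1,\ldots,d-1\}$, and any such flat satisfies $\rk F = d-k$. Since the number of flats of $M_d$ of corank $k$ is by definition $W_d(k)$, the two sums collapse to
$$\sum_{k=0}^{d-1} W_d(k)\, c_k(k-i) \qquad\text{and}\qquad \sum_{k=0}^{d-1} W_d(k)\, c_k(i-d+k),$$
which is the claimed identity. I do not anticipate any real obstacle: the argument is essentially bookkeeping, and the only point that deserves a sentence is the identification of $\emptyset$ as the unique flat of corank $d$, so that excluding it is the same as restricting the outer index to $k \leq d-1$. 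Terms where $k-i<0$ or $i-d+k<0$ vanish automatically because $c_k$ has no negative-degree coefficients, so no further truncation of the ranges is needed.
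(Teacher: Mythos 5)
Your proof is correct and matches the paper's own reasoning exactly: the paper simply states that Corollary~\ref{recursion} and Remark~\ref{nonempty} ``translate to'' this corollary, which is precisely the specialization-and-regrouping you carry out. Your extra sentence noting that $\emptyset$ is the unique flat of corank $d$ is the right bookkeeping point to flag, and nothing further is needed.
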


\begin{remark}
Corollary \ref{nice linear} has proved to be faster than any previously known formula for computing the Kazhdan-Lusztig coefficients
of the braid matroid.
\end{remark}

We may also interpret $r$-Whitney numbers in terms of the numbers $W_d(k)$.  
The following result follows from Lemma \ref{recurse}.

\begin{corollary}\label{whitney stirling}
If we set $i_{r+1} := d$, then we have
$$W_d(i_r,\ldots,i_1) = \prod_{j=1}^{r} W_{i_{j+1}}(i_{j}).$$
\end{corollary}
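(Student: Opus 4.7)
The plan is to proceed by induction on $r$, with Lemma \ref{recurse} doing essentially all of the work once the nice family hypothesis is invoked. The base case $r=1$ is immediate, since the right-hand side is the single factor $W_{i_2}(i_1) = W_d(i_1)$ (using $i_2 = d$), which agrees with the left-hand side by the definition of the Whitney number.

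For the inductive step, I would apply Lemma \ref{recurse} to rewrite
$$W_d(i_r,\ldots,i_1) = \sum_{\crk F = i_r} W_{M_d^F}(i_{r-1},\ldots,i_1).$$
This sum collapses for two reasons. First, the function $W(i_{r-1},\ldots,i_1)$ depends only on the isomorphism class of the lattice of flats, since it is defined as a count of chains of flats indexed by corank and is therefore constant on $\simeq$-classes of matroids. Second, by the defining property of a nice family, $M_d^F \simeq M_{i_r}$ for every flat $F$ of corank $i_r$. Thus every summand equals $W_{i_r}(i_{r-1},\ldots,i_1)$, and the number of such summands is exactly $W_d(i_r)$.

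To finish, I would apply the inductive hypothesis to the rank-$i_r$ matroid $M_{i_r}$ together with the shorter sequence $(i_{r-1},\ldots,i_1)$, in which the convention $i_{r+1} = d$ is replaced by its analogue with $i_r$ playing the role of the top index:
$$W_{i_r}(i_{r-1},\ldots,i_1) = \prod_{j=1}^{r-1} W_{i_{j+1}}(i_j).$$
Multiplying through by the prefactor $W_d(i_r) = W_{i_{r+1}}(i_r)$ produces the desired product formula. The argument is entirely mechanical; the only conceptual point that needs noting is the invariance of the Whitney number under isomorphism of lattices of flats, which is essentially built into the definition, so I do not expect any real obstacle here.
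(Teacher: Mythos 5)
Your proof is correct and is exactly the argument the paper intends; the paper simply states that the corollary ``follows from Lemma \ref{recurse}'' and your write-up is the natural unpacking of that claim, using the nice-family property $M_d^F\simeq M_{i_r}$ together with the $\simeq$-invariance of $W$ to collapse the sum, then inducting on $r$.
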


Combining Corollary \ref{whitney stirling} with Theorem \ref{closed}, we obtain the following result.

\begin{corollary}\label{nice closed}
We have
$$c_d(i)\;\; =\;\; \sum_{r=1}^i\;\sum_{S\subset [r]} (-1)^{|S|} \sum_{\substack{0<a_1<\cdots < a_r<a_{r+1}\\ a_0 = 0\\ a_r = i\\ a_{r+1} = \rk - i}} 
\;\prod_{j=1}^{r} W_{a_{t_{j+1}(S)} +a_{j}}\big(a_{t_{j}(S)} +a_{j-1}\big).$$
\end{corollary}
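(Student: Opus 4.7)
The plan is to obtain the corollary by direct substitution, combining the alternating-sum expression of Theorem \ref{closed} with the product expression of Corollary \ref{whitney stirling}. There is no new combinatorial work beyond an index-matching check.

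First I would apply Theorem \ref{closed} to the matroid $M_d$ in a fixed nice family. This yields an expression for $c_d(i)$ whose outer structure matches the claim exactly: a sum over $r$ from $1$ to $i$, over subsets $S\subset [r]$, and over strictly increasing tuples $(a_0,\ldots,a_{r+1})$ with $a_0=0$, $a_r=i$, and $a_{r+1}=\rk M_d-i=d-i$. The only difference is that inside these sums one sees the multi-indexed Whitney number $W_{M_d}\bigl(a_{t_r(S)}+a_{r-1},\ldots,a_{t_1(S)}+a_0\bigr)$ rather than the product asserted in the statement.

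Next I would apply Corollary \ref{whitney stirling} with the substitution $i_j := a_{t_j(S)}+a_{j-1}$ for $j=1,\ldots,r$, rewriting each such Whitney number as $\prod_{j=1}^{r} W_{i_{j+1}}(i_j)$, where we must set $i_{r+1}=d$. To match this with the product $\prod_{j=1}^{r} W_{a_{t_{j+1}(S)}+a_j}\bigl(a_{t_j(S)}+a_{j-1}\bigr)$ in the claim, I need to verify that $a_{t_{r+1}(S)}+a_r = d$. Since $S\subset [r]$ we have $r+1\notin S$, so $t_{r+1}(S)=\min\{k\geq r+1\mid k\notin S\}=r+1$; hence $a_{t_{r+1}(S)}+a_r = a_{r+1}+a_r = (d-i)+i = d$, as required.

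Since each step is essentially a rewriting, the only real subtlety is the small index computation $t_{r+1}(S)=r+1$, which is what allows Corollary \ref{whitney stirling} to be invoked with the correct outermost index. Once this identification is in place the two rewritings compose, and the desired product formula for $c_d(i)$ drops out immediately.
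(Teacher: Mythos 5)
Your proof is correct and matches the paper's approach exactly: the paper simply states that the corollary follows by combining Corollary~\ref{whitney stirling} with Theorem~\ref{closed}, and you have filled in the details of that combination, including the key observation that $t_{r+1}(S)=r+1$ (since $S\subset[r]$) forces $a_{t_{r+1}(S)}+a_r=a_{r+1}+a_r=d$.
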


Given a nice family, it is natural to use generating functions to collect the Kazhdan-Lusztig polynomials and the $Z$-polynomials.
Let $$P(t,u) := \sum_{d=0}^\infty P_d(t) u^d
\and Z(t,u) := \sum_{d=0}^\infty Z_d(t) u^d.$$
We will also be interested in the exponential generating functions
$$\tilde P(t,u) := \sum_{d=0}^\infty P_d(t) \frac{u^d}{d!}
\and \tilde Z(t,u) := \sum_{d=0}^\infty Z_d(t) \frac{u^d}{d!}.$$
In addition, consider the generating functions
$$g_k(x) := \sum_{d=k}^\infty w_d(k) x^d\and G_k(x) := \sum_{d=k}^\infty W_d(k) x^d,$$
along with their exponential analogues 
$$\tilde g_k(x) := \sum_{d=k}^\infty w_d(k) \frac{x^d}{d!}\and \tilde G_k(x) := \sum_{d=k}^\infty W_d(k) \frac{x^d}{d!}.$$ 

\begin{proposition}\label{crazy}
We have $$P(t,u) = \sum_{k=0}^\infty t^{-k} Z_k(t) g_k(tu)\and Z(t,u) = \sum_{k=0}^\infty t^{-k} P_k(t) G_k(tu),$$
and also $$\tilde P(t,u) = \sum_{k=0}^\infty t^{-k} Z_k(t) \tilde g_k(tu)\and \tilde Z(t,u) = \sum_{k=0}^\infty t^{-k} P_k(t) \tilde G_k(tu).$$
\end{proposition}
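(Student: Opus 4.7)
The plan is to prove all four identities by direct manipulation of the two equations in (\ref{PZ}), which are the only nontrivial inputs. In each case the argument amounts to substituting one of the equations in (\ref{PZ}) into the defining generating function, swapping the order of the two summations, and performing a single algebraic regrouping.

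Consider the first identity. Starting from the definition $P(t,u) = \sum_{d\geq 0} P_d(t) u^d$ and substituting the second equation in (\ref{PZ}), we get
\[
P(t,u) = \sum_{d=0}^\infty \sum_{k=0}^d w_d(k)\, t^{d-k} Z_k(t)\, u^d.
\]
The index set $\{(d,k) : 0 \leq k \leq d\}$ is symmetric in the two summations, so we may interchange them. The crucial algebraic step is to rewrite $t^{d-k} u^d = t^{-k}(tu)^d$, which decouples the dependence on $d$ from the factor $Z_k(t)$. This yields
\[
P(t,u) = \sum_{k=0}^\infty t^{-k} Z_k(t) \sum_{d=k}^\infty w_d(k)\, (tu)^d = \sum_{k=0}^\infty t^{-k} Z_k(t)\, g_k(tu),
\]
as required. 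The second identity is proved by exactly the same maneuver, using instead the first equation of (\ref{PZ}) to expand $Z_d(t)$ and recognizing the resulting inner sum as $G_k(tu)$.

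The exponential versions are handled identically: the definitions of $\tilde P(t,u)$, $\tilde Z(t,u)$, $\tilde g_k$, $\tilde G_k$ differ from their ordinary counterparts only by an extra factor of $1/d!$ attached to each term indexed by $d$, and this factor is carried unchanged through the interchange of summations and the rewriting $t^{d-k}u^d = t^{-k}(tu)^d$. So the inner sum that appears after regrouping is exactly $\tilde g_k(tu)$ (respectively $\tilde G_k(tu)$), and the two exponential identities follow.

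There is no real obstacle here: the proof is essentially a bookkeeping exercise once one spots the rewriting $t^{d-k}u^d = t^{-k}(tu)^d$, which is what forces $tu$ (rather than $u$) to be the natural argument of $g_k$, $G_k$, $\tilde g_k$, $\tilde G_k$. If one wanted to be careful about convergence, it would suffice to treat these as formal power series in $u$ with coefficients in $\Z[t,t^{-1}]$, in which case the swap of summations is justified because for each fixed power $u^d$ only finitely many pairs $(d,k)$ with $k\leq d$ contribute.
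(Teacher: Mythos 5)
Your proof is correct and is essentially the paper's argument read in the opposite direction: the paper starts from $\sum_k t^{-k} Z_k(t)\, g_k(tu)$, expands $g_k$, swaps the sums, and invokes Equation (\ref{PZ}) to recognize $P(t,u)$, whereas you expand $P(t,u)$ via (\ref{PZ}), swap, and recognize $g_k(tu)$. The key step in both is the identity $t^{d-k}u^d = t^{-k}(tu)^d$, and the remark about treating everything as formal power series in $u$ to justify the interchange is a sensible addition the paper leaves implicit.
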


\begin{proof}
We have
$$\sum_{k=0}^\infty t^{-k} Z_k(t) g_k(tz) = \sum_{k=0}^\infty t^{-k} Z_k(t) \sum_{d=k}^\infty w_d(k) (tu)^d
= \sum_{d=0}^\infty \sum_{k=0}^{d} w_d(k)t^{d-k} Z_{k}(t) u^d,$$
which is equal to $P(t,u)$ by Equation \eqref{PZ}.
The proofs of the other three statements are identical.
\end{proof}

\begin{example}\label{braid}
In type A (the first example), Proposition \ref{crazy} is most elegant in its exponential version.
We have $$\tilde g_k(x) = \frac{1}{1+x}\frac{\log(1+x)^k}{k!} \and \tilde G_k(x) = e^x\frac{(e^x-1)^k}{k!},$$
so Proposition \ref{crazy} says that
$$\tilde P(t,u) = \frac{1}{1+tu}\sum_{k=0}^\infty \frac{\log(1+tu)^k}{t^k} \frac{Z_k(t)}{k!}\and \tilde Z(t,u) = e^{tu}\sum_{k=0}^\infty \frac{(e^{tu}-1)^k}{t^k} \frac{P_k(t)}{k!}.$$
\end{example}

\begin{example}\label{type B}
In type B (the second example), we have
$$\tilde g_k(x) = \frac{1}{\sqrt{1+2x}}\frac{\log(1+2x)^k}{2^k\, k!} \and \tilde G_k(x) = e^x\frac{(e^{2x}-1)^k}{2^k\, k!},$$
so Proposition \ref{crazy} says that
$$\tilde P(t,u) = \frac{1}{\sqrt{1+tu}}\sum_{k=0}^\infty \frac{\log(1+2tu)^k}{(2t)^k} \frac{Z_k(t)}{k!}\and \tilde Z(t,u) = e^{tu}\sum_{k=0}^\infty \frac{(e^{2tu}-1)^k}{(2t)^k} \frac{P_k(t)}{k!}.$$
\end{example}

We next consider the third example when $m=1$, so that $M_d$ is the uniform matroid of rank $d$ on $d+1$ elements.
In this case, we can use Proposition \ref{crazy} to derive a precise formula for the $Z$-polynomial.

\begin{proposition}\label{narayana}
If $M_d$ is the uniform matroid of rank $d$ on $d+1$ elements, then the coefficient $z_d(i)$ of $t^i$ in $Z_d(t)$
is equal to the Narayana number $N(d+1,i+1) = \frac{1}{d+1}\binom{d+1}{i+1}\binom{d+1}{i}$.
\end{proposition}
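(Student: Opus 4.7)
The plan is to verify the identity via Möbius inversion. Define $\hat{P}_d(t) := \sum_{k=0}^d w_d(k)\, t^{d-k}\, N_{k+1}(t)$, where $N_{k+1}(t) := \sum_{i=0}^k N(k+1, i+1)\, t^i$ is the Narayana polynomial, so that by Lemma \ref{ZtoP} and Definition \ref{PtoZ} one automatically has $N_{d+1}(t) = \sum_{k=0}^d W_d(k)\, t^{d-k}\, \hat{P}_k(t)$. Since the Narayana polynomials are palindromic of degree $d$, showing $\hat{P}_d = P_d$ for all $d$ will give $Z_d = N_{d+1}$.

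I claim it suffices to show $\deg \hat{P}_d < d/2$ for all $d > 0$. Indeed, by induction on $d$, once $\hat{P}_k = P_k$ for $k < d$, the difference $Z_d - N_{d+1} = P_d - \hat{P}_d$ is palindromic of degree $\leq d$ (since both $Z_d$ and $N_{d+1}$ are), while being of degree $< d/2$ (since both $P_d$ and $\hat{P}_d$ are). Any polynomial $q$ satisfying $q(t) = t^d q(1/t)$ with $\deg q < d/2$ must vanish, so $P_d = \hat P_d$. Thus the proof reduces to bounding $\deg \hat{P}_d$.

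Using the palindromic symmetry $N(k+1, k-\ell+1) = N(k+1, \ell+1)$, we extract
\[ [t^{d-\ell}]\hat{P}_d \;=\; \sum_{k=\ell}^d w_d(k)\, N(k+1, \ell+1), \]
and we need this to vanish for $0 \leq \ell \leq \lfloor d/2 \rfloor$. The $\ell = 0$ case is immediate since $N(k+1,1)=1$ and $\sum_k w_d(k) = \chi_{M_d}(1) = 0$. For $\ell \geq 1$, the identity $N(1,\ell+1) = 0$ makes the anomalous value $w_d(0) = (-1)^d d$ irrelevant, so we may apply the uniform formula $w_d(k) = (-1)^{d-k}\binom{d+1}{k+1}$ at every $k$. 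Reindexing $m=k+1$ and appending the vanishing $m=0$ term produces
\[ [t^{d-\ell}]\hat{P}_d \;=\; (-1)^{d+1}\sum_{m=0}^{d+1} (-1)^m \binom{d+1}{m}\, N(m, \ell+1). \]

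The crux is the observation that $N(m,\ell+1) = \frac{1}{m}\binom{m}{\ell+1}\binom{m}{\ell}$ is, despite appearances, a polynomial in $m$ of degree $2\ell$: for $\ell \geq 1$ both binomial factors carry a factor of $m$, so the $1/m$ cancels to leave a polynomial of degree $(\ell+1)+\ell-1 = 2\ell$. Since $2\ell < d+1$ whenever $\ell \leq \lfloor d/2 \rfloor$, the displayed sum is (up to sign) the $(d+1)$-th finite difference at $0$ of a polynomial of strictly smaller degree, hence vanishes. The main subtlety I anticipate is recognizing this polynomial-in-$m$ structure of $N(m,\ell+1)$ and handling the $k=0$ anomaly in the formula for $w_d(k)$; once these observations are in hand, the classical vanishing of high-order finite differences closes the argument.
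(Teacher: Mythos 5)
Your proof is correct, and it takes a genuinely different route from the paper's. The paper works with ordinary generating functions: it computes $G_k(x) = x^k/(1-x)^{k+2}$, feeds this into Proposition~\ref{crazy} to express $Z(t,u)$ in terms of $P(t,\tfrac{u}{1-tu})$, then substitutes the closed algebraic form of $P(t,v)$ from \cite{PWY} and matches the result against the known algebraic generating function for Narayana numbers from \cite{Petersen-Narayana}. Your argument instead uses M\"obius inversion to reduce the claim to showing that the putative ``Kazhdan--Lusztig polynomial'' $\hat P_d$ satisfies the degree bound $\deg \hat P_d < d/2$, and then derives that bound from the vanishing of $(d+1)$-th finite differences of the degree-$2\ell$ polynomial $m\mapsto N(m,\ell+1)$; the anomalous value $w_d(0)=(-1)^d d$ is correctly dismissed for $\ell\ge 1$ via $N(1,\ell+1)=0$, and the $\ell=0$ case is handled by $\chi_{M_d}(1)=0$. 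What your approach buys is self-containedness: it needs neither the closed form of $P(t,u)$ for uniform matroids nor the Narayana generating function identity, only the defining degree bound for $P_d$, palindromicity of $Z_d$ (Proposition~\ref{palindrome}) and of $N_{d+1}$, and the elementary fact that high-order finite differences of low-degree polynomials vanish. What the paper's approach buys is that it naturally extends to produce an explicit algebraic expression for $Z(t,u)$ as a byproduct, whereas your argument is a verification rather than a derivation. Both are valid; yours is arguably the more illuminating proof of this specific identity. One small point worth adding for completeness is the base case $\hat P_0 = 1 = P_0$, which is immediate but not stated.
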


\begin{proof}
We have $$G_k(x) = \sum_{d=k}^\infty \binom{d+1}{k+1}x^d = \frac{x^k}{(1-x)^{k+2}}$$ if $k>0$,
and $$G_0(x) = \sum_{d=k}^\infty x^d = \frac{1}{1-x}.$$
Proposition \ref{crazy} therefore tells us that
\begin{eqnarray*}Z(t,u) &=& \sum_{k=0}^\infty t^{-k} P_k(t) G_k(tu)\\
&=& \frac{1}{1-tu} + \frac{1}{(1-tu)^2}\sum_{k=1}^\infty P_k(t) \left(\frac{u}{1-tu}\right)^k\\
&=& \frac{1}{1-tu} + \frac{1}{(1-tu)^2}\left(P\!\left(t,\frac{u}{1-tu}\right)-1\right).
\end{eqnarray*}
In \cite[Section 2]{PWY}, we showed that $$P(t,v) - 1 = 
\frac{2}{v}\cdot
\frac{{\left(2tv + 1\right)} v - 1 + \sqrt{1-2 \, {\left(2tv + 1\right)} v + v^{2}}}{{1-\left(2tv + 1\right)}^{2}}.$$
Setting $v = \frac{u}{1-tu}$, we obtain an explicit algebraic expression for $Z(t,u)$.
On the other hand, it is shown in \cite[Equation (2.6)]{Petersen-Narayana} that
$$\sum_{d=0}^\infty \sum_{i=0}^\infty N(d+1,i+1) t^i u^d = - \frac{1}{u} + \frac{1 + u(t-1) - \sqrt{1-2u(t+1)+u^2(t-1)^2}}{2tu^2}.$$
It is an elementary exercise to check that this formula coincides with our expression for $Z(t,u)$.
\end{proof}

\begin{example}\label{favorite}
Finally, we consider the fourth example, where $M_d$ is the matroid represented by all vectors in $\mathbb{F}_q^d$.
This matroid is modular, so we have $P_d(t) = 1$ for all $d$ \cite[Proposition 2.14]{EPW}.
It follows that $$Z_d(t) = \sum_{k=0}^d W_{d}(d-k) t^k = \sum_{k=0}^d \binom{d}{k}_{\!\!q} t^k.$$
\end{example}

\section{Roots of the \boldmath{$Z$}-polynomial}\label{sec:roots}
In \cite[Conjecture 3.2]{kl-survey}, we conjectured that the polynomial $P_M(t)$ is real rooted.  Here
we make the analogous conjecture for the $Z$-polynomial.

\begin{conjecture}\label{real}
For any matroid $M$, all of the roots of $Z_M(t)$ lie on the negative real axis.
\end{conjecture}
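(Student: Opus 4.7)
The plan is to attempt an induction on $\rk M$ starting from the definition $Z_M(t) = \sum_F t^{\rk F} P_{M^F}(t)$, and to leverage the conjectural real-rootedness of the Kazhdan-Lusztig polynomials themselves \cite[Conjecture 3.2]{kl-survey} together with their conjectural interlacing under contractions \cite[Remark 3.5]{kl-survey}. The base cases $\rk M \leq 1$ are immediate from $Z_M(t) = 1$ and $Z_M(t) = 1 + |\cI|\,t$, and it is worth first verifying the conjecture in the four nice families of Section \ref{sec:nice}. In particular, Proposition \ref{narayana} identifies $Z_d$ for the uniform matroid $U_{1,d}$ with the Narayana polynomial, which is classically real-rooted, so that family is essentially free.

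For the inductive step, one needs to show that the sum $\sum_F t^{\rk F} P_{M^F}(t)$ remains real-rooted given that each $P_{M^F}(t)$ is. Since sums of real-rooted polynomials are not generally real-rooted, additional structure is required. The most promising tool is the theory of \emph{common interleavers}: a non-negative linear combination of real-rooted polynomials is itself real-rooted as soon as a single real-rooted polynomial $g$ interlaces each summand. I would try to construct such a $g$ from a maximal chain of flats in $L$, defining it inductively via the contraction recursion and using the conjectural contraction-interlacing of Kazhdan-Lusztig roots to verify that $g$ interleaves each $t^{\rk F} P_{M^F}(t)$. The palindromicity of $Z_M(t)$ (Proposition \ref{palindrome}) provides a useful symmetry constraint on the putative root distribution that can be used as a sanity check during this construction.

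An alternative attack, available for realizable matroids, is geometric. By Theorem \ref{z-geom}, the coefficients of $Z_M(t)$ are the intersection cohomology Betti numbers of a projective variety $Y(V)$, and Proposition \ref{palindrome} becomes Poincar\'e duality. Real-rootedness of the Poincar\'e polynomial of a projective variety is far stronger than duality alone; a natural route is to produce a Lefschetz element on $\IH^*(Y(V); \Ql)$ coming from an ample class, and then to combine hard Lefschetz with the Hodge-Riemann bilinear relations to force the $z_M(i)$ to interlace the coefficients of $(1+t)^{\rk M - 2}\cdot Z_M(t)/(1+t)$ in the appropriate sense. This is in the spirit of recent Hodge-theoretic proofs of log-concavity statements for matroids, and would likely require first establishing strong combinatorial positivity properties of $Z_M$ on the geometric side.

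The main obstacle, on both fronts, is the same: real-rootedness is not preserved under natural operations unless one exerts extremely tight control on interlacing. On the combinatorial side, producing a common interleaver across all flats requires knowing much more than the mere real-rootedness of individual $P_{M^F}(t)$; it effectively requires the entire family of Kazhdan-Lusztig polynomials along contractions to embed in a single Laguerre-P\'olya-compatible collection, and I expect this to be the crux of the difficulty. On the geometric side, Poincar\'e duality is insufficient input, and locating the correct Lefschetz structure on $Y(V)$ that converts palindromicity into real-rootedness appears to require substantial new geometric ideas beyond what is developed in Section \ref{sec:geom}.
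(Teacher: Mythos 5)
This statement is labeled as a \emph{conjecture} in the paper (Conjecture \ref{real}); the paper neither proves it nor claims to, so there is no proof of record to compare your proposal against, and the conjecture remains open. What the paper does supply is verification in two particular nice families. For $U_{1,d}$, Proposition \ref{narayana} identifies $Z_d(t)$ with a Narayana polynomial, whose negative real roots and interlacing are classical (Example \ref{sturm}); you cite this. But you omit the second, and more instructive, verified case: for $M_d$ the matroid on all vectors in $\mathbb{F}_q^d$, Proposition \ref{q-real-nice} gives a self-contained argument. There one has $Z_d(t)=\sum_k\binom{d}{k}_{\!q}t^k$, the $q$-Pascal identity produces the recursion $Z_d(t)=Z_{d-1}(qt)+tZ_{d-1}(t)$, and a strengthened induction (carrying the stronger hypothesis $\alpha_i<q\alpha_{i+1}$ on the roots) establishes both negative real-rootedness and strict interlacing. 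That is the one place in the paper where real-rootedness is actually proved rather than imported, and it is structurally quite different from either of your proposed routes. Beyond these two families, Remark \ref{modular} records only computational evidence.

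Two concrete issues with your outline. First, your rank-one base case is wrong: a rank-one matroid has lattice of flats equal to the two-element chain $\{\hat 0,\hat 1\}$ regardless of $|\cI|$, so $Z_M(t)=P_M(t)+t\cdot P_{M^{\hat 1}}(t)=1+t$, not $1+|\cI|\,t$. Second, and more fundamentally, you are correct that both of your strategies stall, but it is worth being precise about why. On the combinatorial side, the summands $t^{\rk F}P_{M^F}(t)$ in Definition \ref{PtoZ} have wildly varying degrees (roughly between $\rk F$ and $\tfrac{1}{2}(\rk M+\rk F)$), so a single common interleaver in the usual sense cannot exist; one would need the full machinery of compatible families and proper position, and even the conjectural contraction-interlacing of \cite[Remark 3.5]{kl-survey} is not obviously the right input, since it relates $P_M$ to $P_{M/e}$ rather than to $P_{M^F}$ for arbitrary flats. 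On the geometric side, hard Lefschetz for $\IH^*(Y(V);\Ql)$ would give unimodality of the $z_M(i)$, and the Hodge--Riemann relations would give log-concavity-type quadratic inequalities, but real-rootedness requires infinitely many higher Newton inequalities that do not follow from any Lefschetz package currently known; Theorem \ref{z-geom} and Remark \ref{pd} explain palindromicity as Poincar\'e duality and go no further. So your diagnosis of the difficulty is sound, but a proof would require substantially new ideas not present in the paper or in your proposal.
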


We also gave a conjectural relationship between the roots of $P_M(t)$ and the roots of a contraction of $P_{M/e}(t)$, where $e\in\cI$
is a non-loop of $M$ \cite[Conjecture 3.3]{kl-survey}, assuming certain nondegeneracy conditions.  Here we make a similar conjecture for $Z$-polynomials,
but rather than attempting to formulate the correct nondegeneracy conditions, we focus on the case of a nice family, where the conjecture takes a particularly
clean form.  If $f(t)$ is a polynomial of degree $d$ with roots $\a_1\leq\cdots\leq\a_{d}$ and $g(t)$ is a polynomial of degree $d-1$ with roots $\b_1\leq\cdots\leq\b_{d-1}$,
we say that $f(t)$ {\bf interlaces} $g(t)$ if $\a_i \leq \b_i \leq \a_{i+1}$ for all $0<i<d$.  If the inequalities are strict, we say that $f(t)$ {\bf strictly interlaces} $g(t)$.

\begin{conjecture}\label{real-nice}
If $\{M_d\mid d\geq 0\}$ is a nice family, then for all $d$, $Z_{d}(t)$ interlaces $Z_{d-1}(t)$.
\end{conjecture}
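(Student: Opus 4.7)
The plan is to induct on $d$ and produce a three-term recurrence $Z_d(t) = A_d(t) Z_{d-1}(t) - B_d(t) Z_{d-2}(t)$ with polynomial coefficients $A_d, B_d$ satisfying sign conditions that force $(Z_d)$ to be a \emph{Sturm sequence}, thereby proving real-rootedness and interlacing simultaneously (in the spirit of Favard's theorem). Because each $Z_d$ is palindromic by Proposition \ref{palindrome} and the desired roots are negative real, it is convenient to first peel off a factor of $(1+t)$ when $d$ is odd and then substitute $s = -(t + t^{-1})/2$, reducing the problem to a sequence of polynomials of degree $\lfloor d/2 \rfloor$ whose roots should be real and positive. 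Interlacing of the original palindromic polynomials then follows from interlacing of these reduced polynomials, which is a more standard target.

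To produce the recurrence, I would exploit the generating-function identities of Proposition \ref{crazy}. For each of the concrete nice families in Section \ref{sec:nice}, the $\tilde G_k(x)$ have compact closed forms (for instance $e^x(e^x-1)^k/k!$ in type A, or $x^k/(1-x)^{k+2}$ for the uniform family with $m = 1$), which lets us rewrite $\tilde Z(t,u)$ as the composition of an explicit transcendental function with $\tilde P(t,\varphi(t,u))$. Differentiating in $u$ then produces a linear ODE for $\tilde Z(t,u)$ whose coefficients are polynomials in $t$ and $u$, and this ODE translates, on the level of coefficients, into the desired three-term recurrence. The uniform case with $m=1$ serves as an instructive base case: by Proposition \ref{narayana} the $Z_d$ are Narayana polynomials, and interlacing of Narayana polynomials is classical, providing both verification and a template for the general attack.

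The main obstacle is the opacity of the Kazhdan-Lusztig polynomials $P_k(t)$ themselves. Since $Z_d(t) = \sum_k W_d(k) t^{d-k} P_k(t)$, any fine analytic control over $Z_d$ ultimately demands analytic control over the $P_k$, and even real-rootedness of $P_k$ is conjectural. Absent such control the recurrence strategy appears to stall outside the Narayana case, and one would likely need to attack Conjecture \ref{real} in parallel with Conjecture \ref{real-nice}. A complementary avenue, applicable when the family is realizable, is the cohomological interpretation promised in Section \ref{sec:geom}: if $Z_d(t)$ is the intersection cohomology Poincar\'e polynomial of a projective variety $Y_d$, then a natural map $Y_d \to Y_{d-1}$ whose fiber cohomology controls the ratio $Z_d/Z_{d-1}$, combined with Hard Lefschetz on $Y_d$, could implement interlacing geometrically, in the spirit of the proofs of Kazhdan-Lusztig positivity. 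Constructing such a map — or even identifying the correct candidate fibration for the braid family — appears to be the key technical hurdle.
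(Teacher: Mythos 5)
This statement is Conjecture~\ref{real-nice}; the paper does not prove it and explicitly leaves it open, so there is no proof in the text against which to compare your attempt. The paper establishes the conjecture only for two particular nice families: the rank-$d$ uniform matroid on $d+1$ elements, via the Narayana identification of Proposition~\ref{narayana} (Example~\ref{sturm}), and the matroid of all vectors in $\mathbb{F}_q^d$ (Proposition~\ref{q-real-nice}); for the braid, type~B, and general uniform families the paper reports only computational verification for $d\leq 30$ (Remark~\ref{modular}). Your proposal is candid that the Sturm/Favard route stalls once $Z_d$ is no longer explicitly computable, and that is precisely the state of the art.

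It is worth contrasting your proposed mechanism with the one that actually succeeds in Proposition~\ref{q-real-nice}. There the key identity is $Z_d(t) = Z_{d-1}(qt) + tZ_{d-1}(t)$, a two-term functional relation between consecutive $Z$-polynomials (one with a rescaled argument) coming from the $q$-Pascal identity, not a Favard-type three-term recurrence $Z_d = A_d Z_{d-1} - B_d Z_{d-2}$. To close that induction the paper also needs a strengthened inductive hypothesis, namely that the roots $\alpha_1 < \cdots < \alpha_d$ of $Z_d$ satisfy $\alpha_i < q\alpha_{i+1}$; plain interlacing of $Z_{d-1}$ with $Z_{d-2}$ would not suffice to run the argument. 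So even in the one family where a usable recurrence exists, both the shape of the recurrence and the shape of the inductive statement differ from what you propose, and nothing in the paper suggests that a three-term recurrence exists for any of these families. Your generating-function strategy via Proposition~\ref{crazy} runs directly into the obstruction you yourself name: $\tilde Z(t,u)$ is expressed through the opaque $P_k(t)$, so differentiating in $u$ does not produce a closed coefficientwise recurrence without explicit knowledge of the $P_k$. The Hard Lefschetz idea is not pursued in the paper either; Theorem~\ref{z-geom} does identify $Z_d$ with the intersection cohomology Poincar\'e polynomial of a projective Schubert variety, but Hard Lefschetz on a single variety gives unimodality, not interlacing between distinct varieties, so one would still need a genuinely new geometric input (for example a fibration or degeneration relating $Y(V_d)$ to $Y(V_{d-1})$). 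In short, your write-up is a reasonable research sketch rather than a proof, and the gap you identify --- lack of analytic control over the $P_k$ --- is genuine and is not closed by the paper.
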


\begin{example}\label{sturm}
Suppose that $M_d$ is the uniform matroid of rank $d$ on $d+1$ elements.  We showed in Proposition \ref{narayana}
that $Z_d(t) = \sum_{i=0}^d N(d+1,i+1)t^i$ is a Narayana polynomial, 
and these polynomials are known to have interlacing negative real roots
\cite[Problem 4.7]{Petersen-Narayana}.  Thus Conjectures \ref{real} and \ref{real-nice} hold for this nice family.
\end{example}

\begin{remark}
It is interesting to compare the state of affairs for the Kazhdan-Lusztig polynomials and the $Z$-polynomials
of the matroids in Example \ref{sturm}.  The Kazhdan-Lusztig polynomials are known to have negative real roots
\cite[Theorem 3.3]{kl-survey}, but the interlacing property for Kazhdan-Lusztig polynomials \cite[Conjecture 3.4]{kl-survey}
is still open, even in this simple example.  
\end{remark}

\begin{proposition}\label{q-real-nice}
Fix a prime power $q$.  If $M_d$ is the matroid represented by all vectors in $\mathbb{F}_q^d$, then Conjectures \ref{real} and \ref{real-nice}
hold for the nice family $\{M_d\mid d\geq 0\}$.
\end{proposition}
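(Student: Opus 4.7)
The plan is to induct on $d$, proving the strengthened statement that $Z_d(t)$ has $d$ distinct negative real roots $\beta_1^{(d)} < \cdots < \beta_d^{(d)} < 0$ satisfying the $q$-spacing inequality $\beta_i^{(d)} < q\,\beta_{i+1}^{(d)}$ for $1 \leq i \leq d-1$.  Conjecture \ref{real} is then immediate, and the $q$-spacing is precisely what will propagate the induction.

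The starting point is the $q$-Pascal identity $\binom{d}{k}_{\!\!q} = q^k \binom{d-1}{k}_{\!\!q} + \binom{d-1}{k-1}_{\!\!q}$, which combined with Example \ref{favorite} gives the recursion $Z_d(t) = Z_{d-1}(qt) + t\, Z_{d-1}(t)$.  Under the inductive hypothesis on $Z_{d-1}$, the roots $\beta_i^{(d-1)}/q$ of $Z_{d-1}(qt)$ strictly interlace the roots $\beta_1^{(d-1)} < \cdots < \beta_{d-1}^{(d-1)} < 0$ of $t\, Z_{d-1}(t)$; the nontrivial inequality $\beta_i^{(d-1)}/q < \beta_{i+1}^{(d-1)}$ is exactly the $q$-spacing hypothesis applied to $Z_{d-1}$.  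A direct sign computation using the recursion then shows that $Z_d$ takes the sign $(-1)^{d-i}$ at both $\beta_i^{(d-1)}$ and $\beta_i^{(d-1)}/q$, and consequently changes sign exactly once in each of the $d$ intervals $(-\infty,\,\beta_1^{(d-1)})$, $(\beta_i^{(d-1)}/q,\,\beta_{i+1}^{(d-1)})$ for $1 \leq i \leq d-2$, and $(\beta_{d-1}^{(d-1)}/q,\,0)$, contributing one root each.

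From this localization of the roots $\beta_j^{(d)}$ I can read off both the interlacing $\beta_j^{(d)} < \beta_j^{(d-1)} < \beta_{j+1}^{(d)}$, which is Conjecture \ref{real-nice}, and the propagated $q$-spacing: combining $\beta_{j+1}^{(d)} > \beta_j^{(d-1)}/q$ with $\beta_j^{(d)} < \beta_j^{(d-1)}$ yields $\beta_j^{(d)} < \beta_j^{(d-1)} < q\,\beta_{j+1}^{(d)}$.  The base case $d = 1$ is trivial since $Z_1(t) = 1 + t$, for which both conditions are vacuous.

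The main obstacle is pinning down the correct inductive hypothesis.  Merely assuming that all roots of $Z_{d-1}$ are negative real does not close the induction, because the sign-count at step $d$ needs $Z_{d-1}(qt)$ to strictly interlace $t\,Z_{d-1}(t)$, and this interlacing is not automatic from real-rootedness alone but is exactly the $q$-spacing inequality.  Once the strengthened hypothesis is in place, the assumption $q \geq 2 > 1$ does all the remaining work.
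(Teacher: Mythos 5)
Your proof is correct and follows essentially the same strategy as the paper: induction on $d$ with the strengthened $q$-spacing hypothesis $\beta_i < q\beta_{i+1}$, the $q$-Pascal recursion $Z_d(t) = Z_{d-1}(qt) + tZ_{d-1}(t)$, and intermediate-value sign counting. The one refinement you make is to evaluate $Z_d$ at both $\beta_i^{(d-1)}$ and $\beta_i^{(d-1)}/q$ (where the first summand, respectively the second summand, of the recursion vanishes), giving a single tighter localization $\alpha_{i+1} \in (\beta_i/q,\, \beta_{i+1})$ from which the interlacing and the propagated $q$-spacing both fall out immediately; the paper instead evaluates only at the $\beta_i$, then runs a second IVT argument with $Z_{d-1}(q\alpha_i)$ to recover the $q$-spacing.
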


\begin{proof}
We will prove a slightly stronger statement by induction on $d$.  
We will prove that, for every $d$, $Z_d(t)$ has roots $\a_1,\ldots,\a_d<0$ with $\a_i < q\a_{i+1}$ for all $0<i<d$, and that $Z_{d}(t)$
strictly interlaces $Z_{d-1}(t)$. The statement is trivial when $d=1$.

Assume that $Z_{d-1}(t)$ has roots $\b_1,\ldots,\b_{d-1}<0$ with $\b_i < q\b_{i+1}$ for all $0<i<d-1$.
Since $Z_{d-1}(t)$ has $d-1$ distinct real roots, it changes sign at each root.
Since $\b_{i-1} < q\b_{i} < \b_{i}$ for all $1<i<d$ and $Z_{d-1}(0) = 1$, this implies that $(-1)^dZ_{d-1}(q\b_i)$ is positive when $i$ is even and negative when $i$ is odd.

As observed in Example \ref{favorite}, we have
$$Z_d(t) = \sum_{k=0}^d \binom{d}{k}_{\!\!q} t^k.$$
Using the identity
\begin{equation*}\label{pascal}\binom{d}{k}_{\!\!q} = \binom{d-1}{k}_{\!\!q}q^k + \binom{d-1}{k-1}_{\!\!q},\end{equation*}
this implies that $$Z_d(t) = Z_{d-1}(qt) + tZ_{d-1}(t).$$
In particular, we have $$Z_d(\b_i) = Z_{d-1}(q\b_i) + tZ_{d-1}(\b_i) = Z_{d-1}(q\b_i).$$
This tells us that the numbers $Z_d(\b_i)$ alternate in sign, and therefore that for all $1<i<d$ there exists a root $\a_i$ of $Z_d(t)$ with $\a_i\in(\b_{i-1},\b_i)$.
In addition, we know that 
$Z_d(\b_{d-1}) = Z_{d-1}(q\b_{d-1}) < 0$ but $Z_d(0) = 1$, so there must exist a root $\a_d$ of $Z_d(t)$ with $\b_{d-1}<\a_d<0$.
Similarly, we know that
$(-1)^dZ_d(\b_1) = (-1)^dZ_{-1}(q\b_1) < 0$ but $(-1)^dZ_d(t)$ is positive for $t$ sufficiently negative, so there must exist
a root $\a_1<\b_1$.  This proves that the roots of $Z_d(t)$ lie on the negative real axis and $Z_d(t)$ strictly interlaces $Z_{d-1}(t)$.

To complete the induction, we still need to prove that $\a_i < q\a_{i+1}$ for all $0<i<d$.
For all such $i$, we have
$$0 = Z_d(\a_i) = Z_{d-1}(q\a_i) + \a_iZ_{d-1}(\a_i)$$
and
$$0 = Z_d(\a_{i+1}) = Z_{d-1}(q\a_{i+1}) + \a_{i+1}Z_{d-1}(\a_{i+1}).$$
We know that $\a_iZ_{d-1}(\a_i)$ and $\a_{i+1}Z_{d-1}(\a_{i+1})$ have opposite signs, therefore so do $Z_{d-1}(q\a_i)$ and $Z_{d-1}(q\a_{i+1})$.
It follows there there is a root $\b_{j_i}$ of $Z_{d-1}(t)$ in between $q\a_i$ and $q\a_{i+1}$.  Since $\b_{j_1} < \cdots < \b_{j_{d-1}}$, we must have $j_i = i$,
and therefore $\a_i < \b_i < q\a_{i+1}$.
\end{proof}

\begin{remark}\label{modular}
We have proved Conjectures \ref{real} and \ref{real-nice} for our third family when $m=1$ (Example \ref{sturm})
and for our fourth family (Proposition \ref{q-real-nice}).  For the first two families, and for the third family when $2\leq m\leq 10$,
we have checked the conjectures on a computer for all $d\leq 30$.
\end{remark}

\section{Equivariant matroids}\label{sec:eq}
An {\bf equivariant matroid} $\Gamma\curvearrowright M$ consists of a finite group $\Gamma$, a matroid $M$ with ground set $\cI$,
and an action of $\Gamma$ on $\cI$ that takes flats of $M$ to flats of $M$.  In \cite{GPY}, we defined the Kazhdan-Lusztig polynomial
$P_M^\Gamma(t)$ of an equivariant matroid\footnote{In \cite{GPY}, we always denoted our group by $W$.
Here we use the letter $\Gamma$ to avoid conflict with our notation for Whitney numbers.} 
$\Gamma\curvearrowright M$.  This is a polynomial whose coefficients are virtual representations
of $\Gamma$; equivalently, it is a graded virtual representation.  If we forget the action of $\Gamma$ and take the graded dimension,
we recover the ordinary Kazhdan-Lusztig polynomial of $M$.

All of the material in Sections \ref{sec:def} and \ref{sec:KL} generalizes easily to equivariant matroids, starting with the definition of the $Z$-polynomial.
Let $L$ denote the lattice of flats of $M$.
For any flat $F\in L$, let $\Gamma_F\subset \Gamma$ denote the stabilizer of $M$.  We may then define
$$Z_M^\Gamma(t) := \bigoplus_{[F]\in L/\Gamma}t^{\rk F}\Ind_{\Gamma_F}^\Gamma P_{M^F}^{\Gamma_F}(t).$$
The generalization of Theorem \ref{closed} comes from interpreting $r$-Whitney numbers
as permutation representations.  More precisely, given an equivariant matroid $\Gamma\curvearrowright M$ and a sequence
of integers $i_r,\ldots,i_1$, let
$W_M^\Gamma(i_r,\ldots,i_1)$ be the representation of $\Gamma$ with basis $$\big\{(F_r,\ldots,F_1)\in L^r\mid F_r\leq\cdots\leq F_r\;\;\text{and $\crk F_j = i_j$ for all $j$}\big\}.$$  We omit the proof of the following result, as it does not differ significantly from the proof of Theorem \ref{closed}.

\begin{theorem}\label{closed-eq}
For all $i>0$, we have
$$c_M^\Gamma(i) \;\;\cong\;\; \sum_{r=1}^i\;\sum_{S\subset [r]} (-1)^{|S|} \sum_{\substack{a_0<a_1<\cdots < a_r<a_{r+1}\\ a_0 = 0 \\ a_r = i\\ a_{r+1} = \rk-i}}
 W_M^\Gamma\Big(a_{t_{r}(S)} + a_{r-1},\ldots,a_{t_{1}(S)} + a_{0}\Big).$$
\end{theorem}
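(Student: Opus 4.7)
The plan is to upgrade the proof of Theorem \ref{closed} step by step, replacing each integer identity with an isomorphism of virtual $\Gamma$-representations. I will run an induction on $i$ that parallels the non-equivariant argument, so the first task is to establish the two inputs that drove that proof in their equivariant form.

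First I would verify that $Z_M^\Gamma(t)$ is palindromic as a virtual $\Gamma$-representation, i.e.\ $t^{\rk M} Z_M^\Gamma(t^{-1}) \cong Z_M^\Gamma(t)$. The manipulation in Proposition \ref{palindrome} is purely formal: it only uses the defining recursion for $P_M(t)$, which has an equivariant lift in \cite{GPY}. Rewriting the sums over $F \in L$ as sums over orbits $[F] \in L/\Gamma$ with induction from $\Gamma_F$, and using the equivariant identity $\bigoplus_{[F\leq G]} \Ind_{\Gamma_F \cap \Gamma_G}^{\Gamma_G} \chi_{M_G^F}^{\Gamma_F\cap\Gamma_G}(t) \cong t^{\rk M_G}\cdot\triv$, one gets palindromicity. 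Isolating the $F = \emptyset$ term in the resulting identity of $t^i$-coefficients yields the equivariant analog of Corollary \ref{recursion} and Remark \ref{nonempty}: whenever $2i < \rk M$,
\[
c_M^\Gamma(i) \;\cong\; \bigoplus_{[F]\neq[\emptyset]} \Ind_{\Gamma_F}^\Gamma c_{M^F}^{\Gamma_F}(\crk F - i) \;\;\ominus\; \bigoplus_{[F] \neq [\emptyset]} \Ind_{\Gamma_F}^\Gamma c_{M^F}^{\Gamma_F}(i - \rk F).
\]

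Next I would verify the equivariant Whitney recursion
\[
W_M^\Gamma(i_r,\ldots,i_1) \;\cong\; \bigoplus_{[F]:\, \crk F = i_r} \Ind_{\Gamma_F}^\Gamma W_{M^F}^{\Gamma_F}(i_{r-1},\ldots,i_1),
\]
which is immediate from the definition of $W_M^\Gamma$ as a permutation representation on ordered flags of flats: decompose the flag basis according to the $\Gamma$-orbit of the top flat $F_r$ and identify each stabilizer summand with a Whitney representation of the contraction $M^{F_r}$. With both ingredients in hand, the inductive argument from the proof of Theorem \ref{closed} transfers verbatim. Each sum over flats becomes a sum over orbits with induction, each equality becomes an isomorphism of virtual representations, the base case $i=1$ reads $c_M^\Gamma(1) \cong W_M^\Gamma(1) \ominus W_M^\Gamma(\rk - 1)$ (the equivariant version of \cite[Proposition 2.12]{EPW} proved in \cite{GPY}), and the reindexing via $S_0 := S$ and $S_1 := S \cup\{r+1\}$ is purely combinatorial and unchanged.

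The main subtlety I expect is the bookkeeping of iterated inductions from stabilizers. When the recursion for $c_M^\Gamma(i)$ is combined with iterated Whitney recursion, one encounters nested expressions of the form $\Ind_{\Gamma_F}^\Gamma \Ind_{(\Gamma_F)_{F'}}^{\Gamma_F}(\cdots)$, which must be identified by transitivity of induction with $\Ind_{\Gamma_F \cap \Gamma_{F'}}^\Gamma(\cdots)$ and then recognized as an orbit summand of the flag permutation representation $W_M^\Gamma$. This identification is precisely what permits the flag-counting reindexing of the non-equivariant proof to pass through at the level of representations; once it is set up carefully, the remainder is a formal rewriting of Section \ref{sec:KL}, and the induction closes to give the claimed isomorphism.
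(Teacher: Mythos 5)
Your proposal matches the paper's intended route exactly: the paper explicitly omits the proof, stating only that it ``does not differ significantly from the proof of Theorem~\ref{closed},'' and you have correctly filled in the details of that translation --- equivariant palindromicity of $Z_M^\Gamma$, the equivariant versions of Corollary~\ref{recursion} and Lemma~\ref{recurse} with sums over orbits and induction from stabilizers, and the verbatim combinatorial reindexing. Your identification of nested inductions $\Ind_{\Gamma_F}^\Gamma \Ind_{(\Gamma_F)_{F'}}^{\Gamma_F}$ as the main bookkeeping subtlety, resolved by transitivity of induction and recognized as orbit summands of the flag permutation representation $W_M^\Gamma$, is precisely the point that makes the flag-counting argument go through at the level of representations.
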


Theorem \ref{closed-eq} takes a particularly nice form for uniform matroids.  Let $\ch_n$ be the {\bf Frobenius characteristic},
which takes representations of the symmetric group $S_n$ to symmetric functions of degree $n$ in infinitely many variables.
Let $s[n] := \ch_n \operatorname{triv}$ be the complete homogeneous symmetric function of degree $n$.

\begin{proposition}\label{uniform-whitney-eq}
We have $$\ch_{m+d} W_{U_{m,d}}^{S_{m+d}}(i_r,\ldots,i_1) = s[d-i_r]s[i_{r}-i_{r-1}]\cdots s[i_{2}-i_{1}]s[m+i_1].$$
\end{proposition}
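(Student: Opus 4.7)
The plan is to realize $W_{U_{m,d}}^{S_{m+d}}(i_r, \ldots, i_1)$ as a transitive permutation representation of $S_{m+d}$ whose stabilizer is a Young subgroup, and then apply the standard formula for the Frobenius characteristic of an induced representation.

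First, I would record the flat structure of the uniform matroid $U_{m,d}$: the proper flats are exactly the subsets of the ground set $[m+d]$ of cardinality at most $d-1$, and the ground set $[m+d]$ itself is the unique top flat. Provided $i_1 > 0$---which is always the case in the intended application through Theorem \ref{closed-eq}, since there $i_1 \geq a_1 > 0$---each flat $F_j$ in a chain $F_r \leq \cdots \leq F_1$ with $\crk F_j = i_j$ is a proper subset of $[m+d]$ of cardinality $d - i_j$.

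Next, I would set up an $S_{m+d}$-equivariant bijection between such chains and ordered set partitions of $[m+d]$ with prescribed block sizes: send the chain $F_r \subseteq \cdots \subseteq F_1$ to the ordered partition $(F_r,\; F_{r-1}\setminus F_r,\; \ldots,\; F_1 \setminus F_2,\; [m+d]\setminus F_1)$, whose successive block sizes are $d-i_r$, $i_r - i_{r-1}$, $\ldots$, $i_2 - i_1$, and $m + i_1$. If two consecutive $i_j$'s agree the corresponding block is empty, and the convention $s[0]=1$ keeps the formula consistent. Since $S_{m+d}$ acts transitively on ordered set partitions with these fixed block sizes, with the stabilizer of any one of them being the Young subgroup $S_{d-i_r} \times S_{i_r - i_{r-1}} \times \cdots \times S_{i_2-i_1} \times S_{m+i_1}$, we identify $W_{U_{m,d}}^{S_{m+d}}(i_r,\ldots,i_1)$ with $\Ind_{S_{d-i_r} \times \cdots \times S_{m+i_1}}^{S_{m+d}} \triv$.

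Finally, the classical identity $\ch_n \Ind_{S_{n_1} \times \cdots \times S_{n_k}}^{S_n} \triv = s[n_1] \cdots s[n_k]$---immediate from the definition $s[n] = \ch_n \triv$ together with the multiplicativity of $\ch$ with respect to induction from Young subgroups---then yields the stated formula. The only nontrivial step is verifying the bijection with ordered partitions, and this reduces to the simple flat structure of uniform matroids; I do not anticipate a real obstacle.
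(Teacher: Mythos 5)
Your proof is correct and takes essentially the same route as the paper: identify chains of flats with ordered set partitions of $[m+d]$ of prescribed block sizes, observe that $S_{m+d}$ acts transitively with a Young subgroup as stabilizer, and apply the product formula for the Frobenius characteristic of an induced trivial representation. Your explicit caveat that $i_1 > 0$ is a genuine observation the paper leaves implicit (when $i_1 = 0$ the flat $F_1$ is the top flat of size $m+d$ rather than $d - i_1 = d$, and for $m>0$ the claimed Young stabilizer $S_{d-i_r}\times\cdots\times S_{i_2}\times S_m$ is wrong), but as you note it is harmless in the intended application to Theorem \ref{closed-eq}.
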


\begin{proof}
The symmetric group $S_{m+d}$ acts transitively on the set
$$\big\{(F_r,\ldots,F_1)\in L^r\mid F_r\leq\cdots\leq F_r\;\;\text{and $\crk F_j = i_j$ for all $j$}\big\},$$
with stabilizers conjugate to the Young subgroup 
$G := S_{d-i_r}\times S_{i_{r}-i_{r-1}} \times\cdots\times S_{i_{2}-i_{1}}\times S_{m+i_1}$.
It follows that $W_{U_{m,d}}^{S_{m+d}}(i_r,\ldots,i_1)$ is isomorphic to $\Ind_G^{S_{m+d}}\triv$, and the Frobenius characteristic
of the induction of the trivial representation from a Young subgroup is equal to the product of the corresponding
complete homogeneous symmetric polynomials.
\end{proof}

\begin{corollary}\label{uni-eq}
For all $i>0$, we have
$$c_{U_{m,d}}^{S_{m+d}}(i) \cong \sum_{r=1}^i\;\sum_{S\subset [r]} (-1)^{|S|} \sum_{\substack{a_0<a_1<\cdots < a_r<a_{r+1}\\ a_0 = 0 \\ a_r = i\\ a_{r+1} = d-i}}s[m+a_{t_1(S)}]\cdot \prod_{j\in S} s[a_j - a_{j-1}] \cdot \prod_{j\in [r]\smallsetminus S} s[a_{t_{j+1}(S)} - a_{j-1}].$$
\end{corollary}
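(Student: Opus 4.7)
The plan is to derive this formula by straightforward substitution: apply Theorem \ref{closed-eq} to the equivariant matroid $S_{m+d}\curvearrowright U_{m,d}$, take the Frobenius characteristic of both sides, and invoke Proposition \ref{uniform-whitney-eq} to rewrite each equivariant $r$-Whitney number as an explicit product of complete homogeneous symmetric functions.

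Concretely, after applying $\ch_{m+d}$ to the identity in Theorem \ref{closed-eq}, each term corresponds to a tuple $(a_0,\dots,a_{r+1})$ and a subset $S\subset [r]$, with Whitney indices $i_j = a_{t_j(S)} + a_{j-1}$ for $j = 1, \dots, r$. By Proposition \ref{uniform-whitney-eq}, the Frobenius characteristic of the associated Whitney number is
$$s[d-i_r]\cdot\prod_{j=2}^{r} s[i_j - i_{j-1}]\cdot s[m+i_1],$$
so the remaining task is to check that this equals
$$s[m+a_{t_1(S)}]\cdot\prod_{j\in S} s[a_j-a_{j-1}]\cdot\prod_{j\in [r]\setminus S} s[a_{t_{j+1}(S)}-a_{j-1}].$$

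The correspondence between the factors is established by a case analysis on whether $j-1$ lies in $S$. When $j-1\in S$ one has $t_{j-1}(S) = t_j(S)$, so $i_j - i_{j-1} = a_{j-1} - a_{j-2}$, producing the factor $s[a_{j-1}-a_{j-2}]$ that is indexed by $j-1\in S$ in the middle product. When $j-1\notin S$ one has $t_{j-1}(S) = j-1$, so $i_j - i_{j-1} = a_{t_j(S)} - a_{j-2} = a_{t_{(j-1)+1}(S)} - a_{(j-1)-1}$, producing the factor indexed by $j-1\in [r]\setminus S$ in the rightmost product. The boundary factor $s[m+i_1]=s[m+a_{t_1(S)}]$ is already in the desired form, as $a_0=0$, while $s[d-i_r]$ absorbs the $j=r$ contribution in one of the two products after using $d = a_r + a_{r+1}$ together with $t_{r+1}(S) = r+1$ (which holds because $S\subset[r]$).

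The main obstacle is not conceptual but purely bookkeeping: one must confirm that every factor on the product side of the identity above is accounted for by exactly one factor arising from Proposition \ref{uniform-whitney-eq}, paying attention to the endpoint conventions $a_0 = 0$, $a_r = i$, and $a_{r+1}=d-i$. Once this matching is verified, the corollary follows immediately from the equivariant closed formula.
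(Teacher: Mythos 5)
Your proof is correct and follows the same approach as the paper: apply Theorem \ref{closed-eq} and Proposition \ref{uniform-whitney-eq}, then match the resulting factors by a case analysis on whether each index lies in $S$. The only cosmetic difference is that the paper first rewrites all $r$ non-boundary factors uniformly as $\prod_{j\in[r]}s\big[a_{t_{j+1}(S)}+a_j-a_{t_j(S)}-a_{j-1}\big]$ (using $d=a_{t_{r+1}(S)}+a_r$) and then performs one case split on $j\in S$, whereas you run the same split with the index shifted by one and treat $s[d-i_r]$ as a separate boundary case.
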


\begin{proof}
By Theorem \ref{closed-eq} and Proposition \ref{uniform-whitney-eq}, we need to show that
$$s[d-a_{t_{r}(S)} - a_{r-1}]s[a_{t_{r}(S)} + a_{r-1}-a_{t_{r-1}(S)} - a_{r-2}]\cdots s[a_{t_{2}(S)} + a_{1}-a_{t_{1}(S)} - a_{0}]s[m+a_{t_{1}(S)} + a_{0}]$$ is equal to the summand in the statement of the corollary.  First, we note that $a_0 = 0$, so the last factor is equal to $s[m+a_{t_1(S)}]$.  Next, we note that $d = a_{r+1} + a_r = a_{t_{r+1}(S)} + a_{r}$, so the first $r$ factors of the product may be written uniformly
as $$\prod_{j\in [r]}s[a_{t_{j+1}(S)} + a_{j}-a_{t_{j}(S)} - a_{j-1}].$$
For each $j\in [r]$, we have 
$$t_j(S) = \min\{\, k \mid k\geq j\;\;\text{and}\;\; k\notin S\} = \begin{cases}
j\;\;\;\;\;\;\;\;\;\;\;\;\;\text{if $j\notin S$}\\
t_{j+1}(S)\;\;\;\text{if $j\in S$},\end{cases}$$
therefore the expression
$a_{t_{j+1}(S)} + a_{j}-a_{t_{j}(S)} - a_{j-1}$ is equal to $a_{t_{j+1}(S)} - a_{j-1}$ if $j\notin S$ and $ a_{j} - a_{j-1}$ if $j\in S$.  
The result follows.
\end{proof}

\begin{remark}
A positive formula for $c_{U_{m,d}}^{S_{m+d}}(i)$ was given in \cite[Theorem 3.1]{GPY}.  It would be interesting to see if one could give
an alternative proof of that result using Corollary \ref{uni-eq}.
\end{remark}

If $V = \oplus V_i$ is a graded virtual representation of a group $\Gamma$, we say that $V$ is {\bf equivariantly log concave} if, for all $i$,
$V_i^{\otimes 2} - V_{i-1}\otimes V_{i+1}$ is isomorphic to an honest representation.  We say that $V$ is {\bf strongly equivariantly log concave}
if, for all $i\leq j\leq k\leq l$ with $i+l = j+k$, $V_j\otimes V_k - V_i\otimes V_l$ is isomorphic to an honest representation.  If $\Gamma$ is the trivial
group, then log concavity and strong log concavity are equivalent, and agree with the usual notion of log concavity for a sequence of integers.
For nontrivial $\Gamma$, however, strong equivariant log concavity is a strictly stronger condition with the desirable property of being
preserved under tensor product \cite[Remark 5.8]{GPY}.
The following conjecture is the $Z$-version of \cite[Conjecture 5.3(2)]{GPY}.  

\begin{conjecture}\label{elc}
For any equivariant matroid $\Gamma\curvearrowright M$, $Z_M^\Gamma(t)$ is strongly equivariantly log concave.
\end{conjecture}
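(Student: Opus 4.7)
The plan is to attack Conjecture \ref{elc} through the cohomological interpretation of the $Z$-polynomial developed in Section \ref{sec:geom}. When $\Gamma \curvearrowright M$ is equivariantly realizable over $\mathbb{C}$, Theorem \ref{z-geom} identifies the coefficients of $Z_M^\Gamma(t)$ with the equivariant intersection cohomology groups of a projective $\Gamma$-variety $Y(V)$, and the palindromicity of Proposition \ref{palindrome} corresponds to Poincar\'e duality. The natural route is to invoke the full equivariant K\"ahler package on $\IH^*(Y(V);\Ql)$: a $\Gamma$-invariant ample class $\omega$ provides hard Lefschetz isomorphisms of $\Gamma$-representations between cohomologically symmetric degrees, together with Hodge-Riemann bilinear relations on each primitive component.

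From this package, strong equivariant log concavity would be extracted as follows. The hard Lefschetz isomorphisms yield $\Gamma$-equivariant injections $V_a \hookrightarrow V_b$ whenever $a \leq b \leq \rk M - a$, and the primitive decomposition $V_i = \bigoplus_{j \geq 0} \omega^j P_{i-j}$ expresses each $V_i$ as an honest direct sum of shifted primitive pieces. Given $i \leq j \leq k \leq l$ with $i + l = j + k$, one would expand $V_j \otimes V_k - V_i \otimes V_l$ in terms of tensor products of these primitive pieces and verify, component by component using the equivariant Hodge-Riemann signatures, that the difference lies in the positive cone of the representation ring of $\Gamma$. This is the representation-theoretic upgrade of the classical derivation of log concavity of Hilbert functions from the K\"ahler package; equivariance enters only because $\omega$, the primitive decomposition, and the Hodge-Riemann form can all be chosen $\Gamma$-equivariantly.

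The main obstacles are twofold. First, the conjecture is asserted for arbitrary matroids, whereas the geometric argument only covers the equivariantly realizable case; the general case would require a $Z$-analog of the combinatorial intersection cohomology module recently developed for Kazhdan-Lusztig theory of matroids, equipped with both a $\Gamma$-action and an equivariant K\"ahler package. Constructing such an object and establishing the required positivity of its Hodge-Riemann form is the principal technical difficulty, and is not in the literature. Second, even in the realizable case the passage from Hodge-Riemann to the tensor-product inequality is delicate: equivariant unimodality (the statement that $V_{i+1} - V_i$ is honest for $2i < \rk M$) follows immediately from the injection $\omega: V_i \hookrightarrow V_{i+1}$ supplied by hard Lefschetz, but the jump to strong log concavity requires the finer primitive-decomposition analysis sketched above. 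A sensible intermediate target, worth proving first, is equivariant unimodality of $Z_M^\Gamma$ in the realizable case; combined with Proposition \ref{q-real-nice} and Example \ref{sturm}, which already imply the non-equivariant log concavity conjecture for two of the nice families via real-rootedness, this would give substantial evidence before tackling the full conjecture.
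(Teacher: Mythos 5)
This statement is a conjecture in the paper, not a theorem: the authors offer no proof of Conjecture~\ref{elc} in general, and only establish the special case of Proposition~\ref{q-elc} (the modular matroid of all vectors in $\mathbb{F}_q^d$, with $\Gamma = \operatorname{GL}_n(\mathbb{F}_q)$) by a direct, elementary combinatorial argument: they realize $z_M^\Gamma(j)\otimes z_M^\Gamma(k) - z_M^\Gamma(i)\otimes z_M^\Gamma(l)$ as an honest permutation representation using the $\Gamma$-equivariant surjection $(V_j,V_k)\mapsto(V_j\cap V_k, V_j+V_k)$. That argument never touches Hodge theory, hard Lefschetz, or intersection cohomology. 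Your proposal is a strategy sketch for the full conjecture, not a proof, and you are candid that it leaves both the non-realizable case and the key positivity step open; but the gap in the realizable case is more serious than you indicate.

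The central problem is your claim that the K\"ahler package yields log concavity. Hard Lefschetz gives unimodality of the IH Betti numbers of $Y(V)$ (equivalently, injections $V_a\hookrightarrow V_b$ for $a\le b\le\rk M - a$), and the Hodge--Riemann relations give definiteness of the Lefschetz form on each primitive summand. Neither of these, nor the primitive decomposition $V_i=\bigoplus_{j\ge 0}\omega^j P_{i-2j}$ by itself, implies log concavity of the sequence $\dim V_i$: a non-negative sequence $(p_k)$ of primitive dimensions with, say, $p_0=1$, $p_1=0$, $p_2$ large produces a unimodal but non-log-concave sequence of partial sums. There is no ``classical derivation of log concavity of Hilbert functions from the K\"ahler package'' in the generality you invoke; the log concavity results of Adiprasito--Huh--Katz for characteristic polynomials use a much more delicate mixed Hodge--Riemann argument involving two ample classes, and it is not at all clear how to port that to $\IH^*(Y(V))$ or to the equivariant/tensor-product setting required for strong equivariant log concavity. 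So the step ``expand $V_j\otimes V_k - V_i\otimes V_l$ \ldots and verify, component by component \ldots that the difference lies in the positive cone'' is not a known argument and, as written, cannot be carried out; this is the missing idea, independent of (and prior to) the realizability obstruction you raise. Your suggested intermediate target of equivariant unimodality in the realizable case, however, is genuinely accessible from hard Lefschetz alone and would be a worthwhile first result.
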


\begin{remark}
Polynomials whose roots lie on the negative real axis are log concave in the usual sense, hence
if $\Gamma$ is the trivial group, Conjecture \ref{elc} is a weaker version
of Conjecture \ref{real}.
\end{remark}

\begin{proposition}\label{q-elc}
Fix a natural number $d$ and a prime power $q$.  Let $M$ be the matroid represented by all vectors in $\mathbb{F}_q^d$
and let $\Gamma = \operatorname{GL}_n(\mathbb{F}_q)$.
Conjecture \ref{elc} holds for $\Gamma \curvearrowright M$.
\end{proposition}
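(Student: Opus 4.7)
The plan is as follows.

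\textbf{Setup.}  First verify an equivariant analog of Example \ref{favorite}: since the matroid is modular and modularity is inherited by all contractions, the equivariant Kazhdan-Lusztig polynomial is concentrated in degree zero with value $\mathbf{1}$, by an equivariant version of \cite[Proposition 2.14]{EPW}.  Substituting this into the defining formula for $Z_M^\Gamma$ collapses it to
$$ Z_M^\Gamma(t) \;=\; \bigoplus_{k=0}^d V_k\, t^k, \qquad V_k \;:=\; \Ind_{P_k}^\Gamma \mathbf{1}, $$
where $P_k\subset\Gamma$ is the parabolic stabilizer of a fixed $k$-dimensional subspace; thus $V_k$ is the permutation representation of $\Gamma$ on the Grassmannian $X_k$ of $k$-subspaces of $\Fq^d$.

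\textbf{Reformulation via orbits.}  Strong equivariant log concavity requires $V_j\otimes V_k - V_i\otimes V_l$ to be honest whenever $i\leq j\leq k\leq l$ and $i+l=j+k$.  Each tensor product is itself a permutation representation, on $X_j\times X_k$ (resp.\ $X_i\times X_l$), and decomposes into $\Gamma$-orbits indexed by the intersection dimension $m:=\dim(A\cap B)$:
$$ V_a\otimes V_b \;=\; \bigoplus_m \Ind_{H_{a,b,m}}^\Gamma \mathbf{1}, $$
where $H_{a,b,m}$ is the biparabolic stabilizer of such a pair, with Levi $\operatorname{GL}_m\times\operatorname{GL}_{a-m}\times\operatorname{GL}_{b-m}\times\operatorname{GL}_{d-a-b+m}$.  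The task thus becomes: for each $m$ in the range of $V_i\otimes V_l$, embed $\Ind_{H_{i,l,m}}^\Gamma\mathbf{1}$ into some summand $\Ind_{H_{j,k,m'}}^\Gamma\mathbf{1}$ of $V_j\otimes V_k$, with the various $m'$s distinct.

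\textbf{Construction and obstacle.}  At the boundary $m=i$, take $m'=i$: both $H_{j,k,i}$ and $H_{i,l,i}$ stabilize the same pair of endpoints $C\subset D$ with $\dim C=i$ and $\dim D=j+k-i$, and $H_{j,k,i}\subset H_{i,l,i}$ up to conjugation because $H_{j,k,i}$ is obtained by further splitting the middle block $D/C$ into two pieces of sizes $j-i$ and $k-i$; this subgroup inclusion yields the desired embedding of induced modules.  The main obstacle will be the interior range $m<i$, where the Levi block multisets of $H_{j,k,m}$ and $H_{i,l,m}$ are incomparable and no direct biparabolic inclusion is available.  I expect this to require induction on $d$: restrict to a maximal parabolic $P_\ell\subset\Gamma$ stabilizing a line $\ell\subset\Fq^d$, use the splitting $V_k|_{P_\ell} \cong \operatorname{Inf}\bigl(V_{k-1}^{(d-1)}\bigr)\oplus\mathbb{C}[X_k^{\not\ni\ell}]$ to reduce the inflated summand to the inductive hypothesis for $\operatorname{GL}_{d-1}(\Fq)$, analyze the cross-terms and the remaining permutation piece by a parallel Mackey argument, and finally glue the resulting $P_\ell$-equivariant embeddings across different choices of $\ell$ into a genuine $\Gamma$-equivariant embedding.
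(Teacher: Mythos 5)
Your setup coincides with the paper's: modularity kills the Kazhdan--Lusztig polynomials of all contractions, so $z_M^\Gamma(k)\cong V_k:=\C[G_q(d,k)]$, and the task reduces to producing a $\Gamma$-equivariant injection $V_i\otimes V_l\hookrightarrow V_j\otimes V_k$ whenever $i\le j\le k\le l$ and $i+l=j+k$. You then decompose both tensor products into $\Gamma$-orbits indexed by intersection dimension $m$ and try to match orbits one at a time. Your boundary case $m=i$ (pairs $C\subset D$ with $\dim C=i$, $\dim D=l$) is handled correctly by a biparabolic inclusion of stabilizers, but you explicitly concede that for $m<i$ the stabilizers $H_{i,l,m}$ and $H_{j,k,m}$ are incomparable and no direct inclusion exists, and the induction-on-$d$ plan you sketch is never carried out. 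That is a genuine gap: the interior orbits are exactly where the proposition has content, and the ``restrict to $P_\ell$, apply the inductive hypothesis, then glue over all lines $\ell$'' strategy faces the serious obstruction that a family of $P_\ell$-equivariant embeddings, one for each line $\ell$, does not assemble into a single $\Gamma$-equivariant embedding without substantial extra input.

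The paper's own proof is much shorter and avoids the orbit decomposition altogether: it takes the single orbit $S=\{(V_j,V_k):\dim V_j\cap V_k=i\}\subset G_q(d,j)\times G_q(d,k)$, applies the meet--join map $(V_j,V_k)\mapsto(V_j\cap V_k,\,V_j+V_k)$, asserts that this is a $\Gamma$-equivariant surjection onto $G_q(d,i)\times G_q(d,l)$, and pulls back functions to obtain $V_i\otimes V_l\hookrightarrow\C[S]\subset V_j\otimes V_k$. You should scrutinize that surjectivity claim, however: the image of the meet--join map lies inside the incidence locus $\{(C,D):C\subseteq D\}$, which is precisely your boundary orbit $m=i$ and is a \emph{proper} subset of $G_q(d,i)\times G_q(d,l)$ whenever $0<i$ and $l<d$, so the pullback as written is injective only on that summand. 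In that sense your orbit analysis is a sharper diagnosis of where the actual difficulty lies, and neither your sketch nor the paper's one-line map resolves the interior orbits; a complete argument likely requires the multiplicity-free decomposition of $\C[G_q(d,k)]$ into unipotent constituents (under which $V_k$ depends only on $\min(k,d-k)$) rather than a purely set-theoretic map of orbits.
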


\begin{proof}
As we observed in Remark \ref{modular}, $M$ is modular, 
so the equivariant Kazhdan-Lusztig polynomial of $M$ (and of all of its contractions) is the trivial
representation in degree zero.  This means that the coefficient $z_M^\Gamma(k)$ of $t^k$ in $Z_M^\Gamma(t)$ is equal to $W^\Gamma_M(d-k)$, 
the permutation representation
on the set $G_q(d,k)$ of $k$-dimensional linear subspaces of $\mathbb{F}_q^d$.

Fix indices $i\leq j\leq k\leq l$ with $i+l = j+k$.  Let
$$S := \Big\{(V_j,V_k)\in G_q(d,j)\times G_q(d,k)\;\Big{|}\; \dim V_j\cap V_k = i\Big\}.$$
Since $S$ is a $\Gamma$-equivariant subset of $G_q(d,j)\times G_q(d,k)$, the permutation representation $\C[S]$ is naturally a direct summand of
$\C\big[G_q(d,j)\times G_q(d,k)\big].$

The map $(V_j,V_k)\mapsto (V_j\cap V_k, V_j+V_k)$ is a $\Gamma$-equivariant surjection from $S$ to $G_q(d,i)\times G_q(d,l)$.
Pulling back functions, we obtain an injection
$$z_M^\Gamma(i)\otimes z_M^\Gamma(l) \cong \C\big[G_q(d,i)\times G_q(d,l)\big] \hookrightarrow \C[S] \subset \C\big[G_q(d,j)\times G_q(d,k)\big] \cong 
z_M^\Gamma(j)\otimes z_M^\Gamma(k).$$
This completes the proof.
\end{proof}

\begin{remark}
Propositions \ref{q-real-nice} and \ref{q-elc} each strengthen in a different direction the well known fact that the polynomial
$\sum_{k=0}^d \binom{d}{k}_{\!q} t^k$ is log concave in the usual sense.
\end{remark}

\begin{remark}
The proof of Proposition \ref{q-elc} generalizes to any modular matroid.  One only has to replace $G_q(d,k)$ with the set
of flats of rank $k$, replace intersection with meet, and replace sum with join, and the proof goes through verbatim in the more
general setting.
\end{remark}

\section{Geometric interpretation}\label{sec:geom}
Let $k$ be any field, and let $V\subset \mathbb{A}_k^\cI$ be a linear subspace.  The matroid $M(V)$ on the ground set $\cI$
is characterized by the condition that $F\subset \cI$ is a flat if and only if there exists an element $v = (v_i)_{i\in \cI}\in V$ such that $F = \{i\mid v_i=0\}$.  The Kazhdan-Lusztig polynomial of $M(V)$ has a geometric interpretation \cite[Section 3]{EPW},
and a similar interpretation exists for the $Z$-polynomial, as we explain below.  This section is independent of the rest of the paper,
but Theorem \ref{z-geom} provides motivation for the definition of the $Z$-polynomial.

Let $Y(V)$ be the closure of $V$ inside of $\pon$; this variety was studied in \cite{ArBoo}\footnote{In \cite{ArBoo}, 
the authors define the matroid associated with $V$ to be the dual of the matroid that we have defined.} as well as in \cite[Section 4]{HuhWang}.
We call $Y(V)$ the {\bf Schubert variety} of $V$, in analogy with Schubert varieties in the flag variety of a semisimple algebraic group.
Let $X(V) \subset Y(V)$ be the locus where no coordinate is equal to zero.  This is called the {\bf reciprocal plane} of $V$.
The following theorem appeared in \cite[Theorem 3.10 and Proposition 3.12]{EPW}.

\begin{theorem}\label{p-geom}
If $k$ is a finite field and $\ell$ is a prime not equal to the characteristic of $k$, 
then the $\ell$-adic \'etale intersection cohomology of $X(V)$
vanishes in odd degree, and $$P_{M(V)}(t) = \sum_{i\geq 0}t^i \dim \IH^{2i}\!\big(X(V); \Ql\big).$$
If $k = \C$, the same result holds for topological intersection cohomology.
\end{theorem}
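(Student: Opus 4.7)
The plan is to show that $Q(t) := \sum_{i\ge 0} t^i\dim\IH^{2i}\!\big(X(V);\Ql\big)$ coincides with $P_{M(V)}(t)$ by verifying the three defining properties of the Kazhdan-Lusztig polynomial listed in Section \ref{sec:def}. The base case $\rk M(V) = 0$ is trivial because $X(V)$ is then a point, so the work is to check polynomiality together with odd vanishing, the $\chi$-recursion, and the strict degree bound, by induction on $\rk M$.

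\emph{Polynomiality and odd vanishing.} Working over $\Fq$, I would stratify $X(V)$ by the flats of $M(V)$: the open stratum $X(V)\cap(\cs)^\cI$ is, via coordinatewise inversion, the complement of the hyperplane arrangement of $V$, whose $\Fq$-point count is a polynomial in $q$ determined by $\chi_{M(V)}$. The closed strata are, up to torus factors, reciprocal planes of contractions $M^F$, so $\#X(V)(\Fq)$ is polynomial in $q$ by induction. Combined with the contracting $\cs$-action scaling $X(V)$ onto the origin, Deligne's purity theorem forces $\IH^{\text{odd}}(X(V);\Ql) = 0$ and identifies $Q(q)$ with the weighted point count.

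\emph{The Kazhdan-Lusztig recursion.} To identify $Q$ with $P_M$, I would use the wonderful model $\pi:\widetilde Y(V)\to Y(V)$ obtained by iteratively blowing up the strict transforms of the closures of the strata of $Y(V)$ in order of increasing dimension. The resolution $\pi$ is smooth, and its exceptional structure over the stratum indexed by a flat $F$ is a projective bundle whose base is $Y(V_F)$ and whose fibers are wonderful models of $Y(V^F)$. Applying the decomposition theorem to $\pi$ and tracking the contribution of each IC summand yields the identity $t^{\rk M}Q(t^{-1}) = \sum_F \chi_{M_F}(t)\,Q_{M^F}(t)$, which by induction matches the defining recursion of $P_M$.

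\emph{Degree bound and main obstacle.} The hardest step is $\deg Q < \tfrac{1}{2}\rk M$, i.e.\ $\IH^{2i}(X(V);\Ql) = 0$ whenever $2i\ge \dim_\C X(V) = \rk M$. This is the standard intersection cohomology vanishing for an affine cone: because $X(V)$ contracts to the origin under a $\cs$-action, its global $\IH$ equals the stalk cohomology of $\IC_{Y(V)}$ at the cone point, which by the middle-perversity support axiom lives strictly below the middle degree. With the three properties verified, uniqueness of the Kazhdan-Lusztig polynomial gives $Q(t) = P_{M(V)}(t)$, and the $k=\C$ case follows from the comparison between $\ell$-adic and topological intersection cohomology after spreading out. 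The main obstacle is the combinatorial accounting in the decomposition theorem step: one must show that exactly one IC summand appears per flat, with multiplicity controlled by $\chi_{M_F}$, which ultimately amounts to a semismallness (or parity-preserving) property of the chosen resolution over $X(V)$.
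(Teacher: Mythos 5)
The paper does not prove Theorem~\ref{p-geom}; it imports it verbatim from \cite[Theorem 3.10 and Proposition 3.12]{EPW}, so there is no in-paper argument to compare against --- I will compare against the proof in \cite{EPW}. Your overall plan is the right one: set $Q(t) := \sum_{i\ge 0} t^i \dim\IH^{2i}\!\big(X(V);\Ql\big)$ and verify the three defining properties of the Kazhdan--Lusztig polynomial. Your degree-bound step is essentially the one in \cite{EPW}: $X(V)$ is an affine cone with cone point $\infty$, the contracting $\cs$-action identifies $\IH^*(X(V))$ with the stalk $\IH^*_\infty\!\big(Y(V);\Ql\big)$, and the strict support axiom for $\IC_{Y(V)}$ at a zero-dimensional stratum kills this stalk in degrees $\ge \dim X(V) = \rk M$.

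The gap is the recursion, and you correctly flag it as the main obstacle: your proposal is to apply the decomposition theorem to a wonderful resolution of $Y(V)$ and claim that the IC summands, with multiplicities, reproduce $\sum_F \chi_{M_F}(t)\, Q_{M^F}(t)$. The decomposition theorem gives you \emph{a} decomposition into shifted IC sheaves, but pinning down which ones appear and with what multiplicities for this particular resolution is a hard theorem, not a bookkeeping step, and nothing in your sketch even begins it. (Your identification of the strata is also off: under the paper's stratification, $X(V)\cap Y(V)_F$ is the complement of the localized arrangement in $V_F$, governed by $M_F$, not a reciprocal plane of a contraction $M^F$; the point-count-plus-purity remark is likewise shaky, since the Grothendieck--Lefschetz trace formula computes compactly supported ordinary cohomology, not $\IH$.) The argument in \cite{EPW} avoids resolutions entirely. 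It stratifies $Y(V)$ by flats, shows by a local structure computation and induction that the stalk of $\IC_{Y(V)}$ along the stratum indexed by $F$ has Poincar\'e polynomial $P_{M(V)^F}(t)$, and then runs the hypercohomology spectral sequence of the stratification together with Poincar\'e--Verdier duality to obtain the recursion; this is exactly the technique the present paper reuses, with $X$ and $Y$ interchanged, to prove Theorem~\ref{z-geom}. As it stands your proposal is a plausible outline with its central step left open, not a proof.
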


In this section we prove the analogous result for the $Z$-polynomial.

\begin{theorem}\label{z-geom}
If $k$ is a finite field and $\ell$ is a prime not equal to the characteristic of $k$, 
then the $\ell$-adic \'etale intersection cohomology of $Y(V)$
vanishes in odd degree, and $$Z_{M(V)}(t) = \sum_{i\geq 0}t^i\dim \IH^{2i}\!\big(Y(V); \Ql\big).$$
If $k = \C$, the same result holds for topological intersection cohomology.
\end{theorem}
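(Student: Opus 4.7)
The plan is to mirror the proof of Theorem \ref{p-geom} in \cite{EPW}, adapting it to the compactified setting via a stratification of $Y(V)$ by the flats of $M(V)$.

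First, I would describe a natural locally closed stratification $Y(V) = \bigsqcup_{F \in L} S_F$, where $S_F$ is the locus of points of $Y(V)$ whose set of coordinates equal to $0$ is exactly $F$.  A standard feature of the lattice-of-flats description of $V$ is that such a subset must be a flat; moreover, projection away from the coordinates indexed by $F$ should identify $S_F$ with the reciprocal plane $X(V^F)$ of the contraction matroid, so that $S_F$ has codimension $\rk F$ in $Y(V)$ and its closure is the Schubert variety $Y(V^F)$.  I would verify this directly in the smallest cases and then in general using the standard description of the closure of a linear subspace in a product of projective lines.

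The key cohomological input is to show that the IC sheaf of $Y(V)$ restricts nicely along the stratification: namely, $\IC_{Y(V)}|_{S_F}$ should agree with $\IC_{X(V^F)}$, shifted so as to contribute a factor of $t^{\rk F}$ to the IH Poincar\'e polynomial.  Granted this, the IH Poincar\'e polynomial of $Y(V)$ decomposes as a sum over flats, with the $F$-summand equal to $t^{\rk F}$ times the IH Poincar\'e polynomial of $X(V^F)$; invoking Theorem \ref{p-geom}, this is exactly $\sum_F t^{\rk F} P_{M^F}(t) = Z_{M(V)}(t)$.  Purity and odd-degree vanishing of $\IH^*(Y(V); \Ql)$ then propagate from the corresponding properties of $\IH^*(X(V^F); \Ql)$ established in Theorem \ref{p-geom}.

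The main obstacle is precisely this local IC identification along each $S_F$.  I expect it to reduce to a normal-slice argument: a slice transverse to $S_F$ at a generic point should look locally like the Schubert variety $Y(V_F)$ of the localization $M_F$, whose IC can be analyzed by induction on rank.  An alternative route is to construct a small resolution $\widetilde{Y} \to Y(V)$ as an iterated toric blowup of $\pon$ along coordinate subvarieties indexed by chains of flats, in the spirit of a wonderful compactification tied to the Bergman fan of $M$; one would then read off $H^*(\widetilde{Y}; \Ql)$ from the resulting cell decomposition and invoke smallness to transfer the computation to $\IH^*(Y(V); \Ql)$.  Either approach requires careful matroid-theoretic bookkeeping of contributions indexed by chains of flats.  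The transition from the case $k = \Fq$ to the case $k = \C$ uses standard comparison theorems between \'etale and singular intersection cohomology.
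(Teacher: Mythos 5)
Your high-level plan — stratify $Y(V)$ by the flats of $M(V)$, feed the local structure of the $\IC$ sheaf into Theorem \ref{p-geom}, and get odd-degree vanishing and degeneration from parity — is the right strategy and matches the paper's. But the details of the stratification and of the local-to-global step are off in ways that would derail the argument. The paper stratifies $Y(V)$ by the flat $F_p=\{i : p_i\neq\infty\}$ (Lemma \ref{a flat}), and the stratum $Y(V)_F=\{p : F_p=F\}$ is canonically the affine space $V_F$, the \emph{localization} of $V$, with dimension $\rk F$ (Lemma \ref{vec}); it is not the reciprocal plane $X(V^F)$, and its closure is $Y(V_F)$, not $Y(V^F)$. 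Your stratification by $\{i : p_i = 0\}$ is a different decomposition, and the $\IC$ sheaf of $Y(V)$ need not be locally constant on those pieces, since two points with the same vanishing set can have different $F_p$ and hence different $\IC$ stalks. Moreover, the statement ``$\IC_{Y(V)}|_{S_F}$ agrees with $\IC_{X(V^F)}$'' cannot be right as written: the restriction of an $\IC$ sheaf to a stratum of its stratification is a (shifted) locally constant complex, while $\IC_{X(V^F)}$ is not. What is true, and what the argument actually needs, is the \emph{stalk} computation $\IH^*_p(Y(V);\Ql)\cong\IH^*(X(V^{F_p});\Ql)$ (Lemma \ref{local}, deduced from the analogous statement for $X(V)$ in \cite[Lemma 3.8]{EPW}).

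The globalization step is also more delicate than ``sum over strata.'' The paper runs a first-quadrant spectral sequence with $E_1^{p,q}=\bigoplus_{\crk F=p}\mathbb{H}^{p+q}(\iota_F^!\IC_{Y(V)})$, identifies the terms with $\IH^{p+q}_c(X(V^F);\Ql)\cong\IH^{p-q}(X(V^F);\Ql)$ using Verdier/Poincar\'e duality on the affine reciprocal planes, and gets degeneration at $E_1$ from the odd-vanishing of $\IH^*(X(V^F))$. That yields $\IH^{2i}(Y(V))\cong\bigoplus_F\IH^{2(\crk F-i)}(X(V^F))$, and a second application of Poincar\'e duality, this time for the projective variety $Y(V)$, converts $\crk F-i$ to $i-\rk F$, producing the $t^{\rk F}$ weights in $Z_{M(V)}(t)$. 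Your sketch leaps directly to the final decomposition without either the shriek/costalk bookkeeping or the second duality, which is exactly where the $t^{\rk F}$ factor comes from. Finally, the suggested alternative via a small toric resolution adapted to the Bergman fan is an interesting idea but is not carried out and is not the route the paper takes; as stated it would need its own proof of smallness and of the identification of fibers, neither of which is sketched.
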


\begin{remark}\label{pd}
In light of Theorem \ref{z-geom}, Proposition \ref{palindrome} for $M(V)$ may be interpreted as Poincar\'e duality for the intersection
cohomology of the projective variety $Y(V)$.
\end{remark}

\begin{remark}
Any matroid that can be realized over some field can be realized over a finite field, so Theorems \ref{p-geom} and \ref{z-geom}
apply to all realizable matroids.
\end{remark}

A nonempty subset $C\subset \cI$ is called a {\bf circuit} if and only if, for every flat $F$, $|C\cap F^c| \neq 1$.
Conversely, a subset $F\subset \cI$ is a flat if and only if, for every circuit $C$, $|C\cap F^c| \neq 1$.
Given a circuit $C$, there exist elements $(C_i)_{i\in C}\subset (k^\times)^C$  such that $\sum_i C_iv_i = 0$ for all $v\in V$,
and these elements are unique up to scale.
The homogeneous coordinate ring of $Y(V)\subset\pon$ has the following description \cite[Theorem 1.3(a)]{ArBoo}:
$$k[Y(V)] = k[x_i,y_i]_{i\in \cI}\Big{/}\Big{\langle}
f_C(x,y)\;\Big{|}\;\text{$C$ a circuit}
\Big{\rangle},$$
where
$$f_C(x,y) = \sum_{i\in C} C_i x_i y_{C\smallsetminus\{i\}}\and
y_S := \prod_{i\in S}y_i.$$
Given a point $p\in Y(V)$, let 
$F_p := \{i\in\cI\mid p_i\neq\infty\}.$

\begin{lemma}\label{a flat}
The set $F_p$ is a flat.
\end{lemma}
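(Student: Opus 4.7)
The plan is to use the characterization of flats given just before the lemma: $F \subset \cI$ is a flat if and only if $|C \cap F^c| \neq 1$ for every circuit $C$. So I will fix $p \in Y(V)$ and a circuit $C$, and show that the complement $F_p^c = \{i \in \cI \mid p_i = \infty\}$ cannot intersect $C$ in exactly one element.

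The argument is to assume for contradiction that $C \cap F_p^c = \{i_0\}$ for some single index $i_0$, and then extract a contradiction from the defining relation $f_C(x,y) = \sum_{i \in C} C_i x_i y_{C \smallsetminus \{i\}}$ evaluated at $p$. The key observation is that $p_i = \infty$ translates into $y_i = 0$ (and consequently $x_i \neq 0$, since $p_i$ is a well-defined point of $\mathbb{P}^1$), while $p_i \neq \infty$ translates into $y_i \neq 0$. Under the assumption, $y_{i_0} = 0$ but $y_i \neq 0$ for every other $i \in C$. In each monomial $C_i x_i y_{C \smallsetminus \{i\}}$ with $i \neq i_0$, the product $y_{C \smallsetminus \{i\}}$ still contains the factor $y_{i_0}$, so that monomial vanishes. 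Only the $i = i_0$ term survives, yielding $f_C(p) = C_{i_0} x_{i_0} y_{C \smallsetminus \{i_0\}}$. Since $C_{i_0} \neq 0$ (it lies in $k^\times$ by the definition of circuit coefficients), $x_{i_0} \neq 0$, and $y_{C \smallsetminus \{i_0\}} = \prod_{i \in C, i \neq i_0} y_i$ is a product of nonzero elements, this expression is nonzero. This contradicts the fact that $p \in Y(V)$ satisfies $f_C = 0$.

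I do not anticipate any serious obstacle here; the lemma is essentially a direct unpacking of the equations cutting out $Y(V)$ against the circuit characterization of flats. The only mildly delicate point is making sure the distinction between $y_i = 0$ and $y_i \neq 0$ correctly tracks the locus $p_i = \infty$ versus $p_i \neq \infty$ in $\mathbb{P}^1$, and keeping track that $C_{i_0}$ lies in $k^\times$ so that the surviving monomial really is nonzero. Once those points are stated carefully, the chain of implications is immediate.
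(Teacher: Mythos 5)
Your proof is correct and is essentially the same argument as the paper's: both assume $C\cap F_p^c=\{i_0\}$ for contradiction, observe that every monomial of $f_C$ other than the $i_0$ term contains the factor $y_{i_0}=0$, and note that the surviving term $C_{i_0}x_{i_0}y_{C\smallsetminus\{i_0\}}$ is a product of nonzero quantities, contradicting $f_C(p)=0$. Your write-up just spells out the $p_i=\infty \Leftrightarrow y_i=0$ translation a bit more explicitly than the paper does.
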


\begin{proof}
%
If $F_p$ is not a flat, then there exists a circuit $C$ and an element $i\in\cI$ such that $F_p^c\cap C = \{i\}$.
For all $j\in C\smallsetminus\{i\}$, $y_{C\smallsetminus\{j\}}$ is a multiple of $y_i$, which vanishes at $p$.  
But $x_i$ does not vanish at $p$, nor does $y_{C\smallsetminus\{i\}}$.  This contradicts the fact that $f_C$ vanishes at $p$.
\end{proof}

For any flat $F$, let $V^F\subset\mathbb{A}^{F^c}_k$ be the intersection of $V$ with $\mathbb{A}^{F^c}_k$ inside of $\mathbb{A}_k^\cI$,
and let $V_F\subset \mathbb{A}^F_k$ be the image of $V$ along the projection from $\mathbb{A}_k^\cI$.
Concretely, $V_F$ is cut out of $\mathbb{A}^F_k$ by the linear equations $f_C(x,1)$ for all circuits $C\subset F$.
Then we have $M(V^F) = M(V)^F$ and $M(V_F) = M(V)_F$.
Let $Y(V)_F := \{p\in Y(V)\mid F_p = F\}$, so that $Y(V) = \bigsqcup_{F} Y(V)_F$.

\begin{lemma}\label{vec}
For any flat $F$, there is a canonical isomorphism $Y(V)_F \cong V_F$.
\end{lemma}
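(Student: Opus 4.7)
The plan is to use the presentation of $k[Y(V)]$ given just before the statement and carry out a local analysis on the stratum $Y(V)_F$, where the $y_i$ coordinates are nonzero precisely for $i \in F$ and the $x_i$ coordinates are nonzero precisely for $i \notin F$. Working in the affine chart obtained by normalizing $y_i = 1$ for $i \in F$ and $x_i = 1$ for $i \notin F$, the set $Y(V)_F$ becomes identified with the locally closed subscheme of $\mathbb{A}^F_k$ cut out by the equations $f_C(x,y) = 0$ after this normalization, together with the open conditions $x_i \neq 0$ for $i \notin F$ (which are automatic here). The candidate isomorphism sends $p \in Y(V)_F$ to the tuple $(p_i)_{i \in F} \in \mathbb{A}^F_k$.

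The heart of the argument is to check, circuit by circuit, which relations $f_C$ survive under this normalization. I would do a case analysis on the size $|C \cap F^c|$. If $|C \cap F^c| = 0$, then $C \subset F$, every $y_{C \smallsetminus \{i\}}$ equals $1$, and $f_C$ becomes the linear relation $\sum_{i \in C} C_i x_i = 0$; these are precisely the linear equations cutting out $V_F$ inside $\mathbb{A}^F_k$. If $|C \cap F^c| \geq 2$, then for every $i \in C$ the monomial $y_{C \smallsetminus \{i\}}$ still contains at least one factor $y_j$ with $j \in F^c$, hence vanishes, so $f_C$ is identically zero. The case $|C \cap F^c| = 1$ is precisely the case ruled out by the characterization of flats in terms of circuits recalled just above the coordinate ring presentation, so it does not occur. (This is the crucial place where the hypothesis that $F$ is a flat enters, and it is the step most likely to be a sticking point until one realizes that the flat condition exactly matches what is needed to avoid a spurious relation forcing $x_j = 0$ for some $j \notin F$.)

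Combining these observations, the equations defining $Y(V)_F$ in the chosen affine chart reduce exactly to the linear equations defining $V_F \subset \mathbb{A}^F_k$, giving a morphism $Y(V)_F \to V_F$ which is an isomorphism of schemes. The inverse is visible too: given $(v_i)_{i \in F} \in V_F$, extend by $p_i = \infty$ for $i \notin F$ to obtain a point of $(\mathbb{P}^1_k)^\cI$ that satisfies all the relations $f_C$ by the same case analysis, and lies in $Y(V)$ by construction of $V_F$ as a projection of $V$. This mutual inverse shows the map is a canonical isomorphism, completing the proof.
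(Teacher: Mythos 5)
Your proof is correct and follows essentially the same route as the paper: pass to the affine chart adapted to $F$, set $y_i = 1$ for $i \in F$, $x_i = 1$ and $y_i = 0$ for $i \in F^c$, and observe that the circuit relations reduce to the linear equations $f_C(x,1)$ for $C \subset F$, which cut out $V_F$. You have in fact made explicit the case analysis on $|C \cap F^c|$ (and the role of the flat hypothesis in ruling out $|C \cap F^c| = 1$) that the paper's proof leaves implicit, so this is a slightly more detailed version of the same argument.
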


\begin{proof}
The affine coordinate ring of $Y(V)_F$ is obtained from $k[Y(V)]$ by setting $x_i = 1$ and $y_i = 0$ for all $i\in F^c$ 
and $y_j=1$ for all $j\in F$.
This ring is isomorphic to $$k[x_i]_{i\in F}\Big{/}\Big{\langle}
f_C(x,1)\;\Big{|}\;\text{$C\subset F$ a circuit}
\Big{\rangle}.$$
As observed above, these are exactly the equations that define $V_F$ inside of $\mathbb{A}^F_k$.
\end{proof}

Fix a prime $\ell$ different from the characteristic of $k$.  The $\ell$-adic \'etale intersection cohomology group of $Y(V)$
is defined as $$\IH^{*}\!\big(Y(V); \Ql\big) := \H^{*-\dim Y(V)}\!\big(Y(V); \IC_{Y(V)}\big).$$
For any point $p\in Y(V)$, we define $$\IH^{*}_p\!\big(Y(V); \Ql\big) := \H^{*-\dim Y(V)}\!\big(\IC_{Y(V),p}\big)$$
to be the cohomology of the stalk of the IC sheaf at $p$.

\begin{lemma}\label{local}
For any point $p\in Y(V)$, $\IH^{*}_p\!\big(Y(V); \Ql\big)$ is isomorphic to $\IH^*\!\big(X(V^{F_p}); \Ql\big)$.
\end{lemma}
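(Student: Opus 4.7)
The plan is to produce an étale local product decomposition of $Y(V)$ at $p$, then combine a Künneth formula with the contracting $\mathbb{G}_m$-action on the reciprocal plane.

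I would first pass to a standard affine chart $U \subset (\mathbb{P}^1_k)^\cI$ around $p$: fix $y_i = 1$ and use $x_i$ as the coordinate for $i \in F := F_p$, and fix $x_j = 1$ and use $y_j$ as the coordinate for $j \in F^c$. In this chart, $p$ has coordinates $x_i = p_i$ (a point $x_p \in V_F$) and $y_j = 0$. The generators $f_C$ of the ideal of $Y(V)$ split according to the partition $\cI = F \sqcup F^c$. Circuits $C \subseteq F$ give the linear relations $\sum_{i \in C} C_i x_i = 0$ cutting out $V_F$ in the $x$-coordinates; circuits $C \subseteq F^c$ give equations $\sum_{i \in C} C_i y_{C \setminus \{i\}} = 0$ which are defining equations of $X(V^F)$ in the $y$-coordinates; and mixed circuits give $f_C = y_{C \cap F^c} \cdot (\sum_{i \in C \cap F} C_i x_i) + \sum_{i \in C \cap F^c} C_i y_{(C \cap F^c) \smallsetminus \{i\}}$. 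Since Lemma \ref{a flat} forces $|C \cap F^c| \geq 2$ for every mixed $C$, the first summand above has degree $\geq 2$ in $y$, while the second is the defining equation $f_{C \cap F^c}$ coming from the contracted matroid, which exactly supplies the remaining defining equations of $X(V^F)$ not visible from the pure-$F^c$ circuits.

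The key step is to upgrade these observations into an étale local isomorphism $Y(V) \cap U \cong V_F \times X(V^F)$ at $p$, sending $p$ to $(x_p, 0)$ where $0$ is the cone point of $X(V^F)$. I would do this by applying the implicit function theorem in the étale (or formal) sense: because the lowest-order-in-$y$ terms of the mixed equations already cut out $X(V^F)$, the higher-order corrections should be absorbable into an étale change of coordinates of the form $y_j \mapsto y_j \cdot g_j(x, y)$ with $g_j(x_p, 0) = 1$. The main obstacle is the bookkeeping needed to choose this change of coordinates compatibly across all mixed circuits: one must verify that the Jacobian of an appropriate subsystem of the mixed equations with respect to a matching subset of the $y$-variables is invertible at $p$, and check that the resulting trivialization is independent of the choices. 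A clean way to organize this is to use a basis-exchange argument on the matroid $M$ to order circuits and $y$-variables compatibly.

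With the product decomposition in hand, the Künneth formula for intersection cohomology (applicable because $V_F$ is smooth, so $\IC_{V_F}$ is constant) gives $\IC_{Y(V), p} \cong \IC_{X(V^F), 0}$ up to shift, and hence $\IH^*_p(Y(V); \Ql) \cong \IH^*_0(X(V^F); \Ql)$. Finally, $X(V^F)$ is an affine cone with contracting $\mathbb{G}_m$-action scaling the $y$-coordinates, having $0$ as its unique attracting fixed point; a standard argument for $\mathbb{G}_m$-equivariant constructible sheaves on affine cones---the one used in the proof of \cite[Theorem 3.10]{EPW}---gives $\IH^*_0(X(V^F); \Ql) \cong \IH^*(X(V^F); \Ql)$. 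Concatenating these isomorphisms yields the claim.
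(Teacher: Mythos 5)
Your approach is genuinely different from the paper's, and it is worth comparing the two. The paper's proof is a two-line reduction: since $\IC_{Y(V)}$ is locally constant along the strata $Y(V)_F$, one may move $p$ within $Y(V)_F$ to a point where all finite coordinates are nonzero; such a point lies in the open subscheme $X(V) \subset Y(V)$, where $\IC_{Y(V)}$ restricts to $\IC_{X(V)}$, and then \cite[Lemma 3.8]{EPW} gives the stalk cohomology directly. Your proposal instead tries to \emph{re-derive} the local structure of $Y(V)$ near $p$ from scratch via an \'etale product decomposition $Y(V) \cong V_F \times X(V^F)$, followed by K\"unneth and a contracting $\mathbb{G}_m$-action. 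That is essentially the strategy used inside \cite{EPW} to prove Lemma 3.8 for $X(V)$, so you are in effect reproving that input rather than reusing it.

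The gap is exactly where you flag it, and it is not a small one. The \'etale local trivialization is the entire content of the lemma. Your implicit-function sketch proposes to absorb the higher-order (in $y$) parts of the mixed circuit equations by a coordinate change $y_j \mapsto y_j \, g_j(x,y)$; but there is no smooth ambient object to which the implicit function theorem applies directly, since $X(V^F)$ is a singular affine cone, and you must show that the proposed rescalings can be chosen \emph{consistently} across all mixed circuits so that each equation transforms onto its leading term while the Jacobian of the chosen subsystem remains invertible. Neither the existence nor the well-definedness of such a simultaneous rescaling is verified. There is also a combinatorial loose end: the truncated monomials $\sum_{i \in C\cap F^c} C_i\, y_{(C\cap F^c)\smallsetminus\{i\}}$ coming from mixed circuits $C$ of $M$ need not be circuit polynomials of $M^F$ (the set $C \cap F^c$ can be dependent but not minimal in $M^F$), so one must still argue that these relations, together with the pure-$F^c$ circuit polynomials, generate the defining ideal of $X(V^F)$. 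By contrast, the paper's proof sidesteps both issues entirely: the generic point of $V_F \cong Y(V)_F$ has no zero coordinates, so the stratum meets $X(V)$, and local constancy of the $\IC$ sheaf along the stratum transports the stalk computation to a point where \cite[Lemma 3.8]{EPW} already applies. If you want to keep your route, you would need to actually prove the local product structure of $Y(V)$ (not just of $X(V)$); the K\"unneth and $\mathbb{G}_m$-localization steps at the end are fine and standard.
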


\begin{proof}
Since the IC sheaf is locally constant along strata, we may assume that $p_i\neq 0$ for all $i$, which means
that $p$ lies in the open subscheme $X(V)\subset Y(V)$.  The result then follows from the analogous statement
for $X(V)$, which is proved in \cite[Lemma 3.8]{EPW}.
\end{proof}

\begin{proofz}
We follow a slightly modified version of the argument in \cite[Section 3]{PWY}.  For any flat $F$,
let $\iota_F:Y(V)_F\to Y(V)$ be the inclusion of the stratum indexed by $F$.
There is a first quadrant cohomological spectral sequence $E$ with
$$E_1^{p,q} = \bigoplus_{\crk F = p}\mathbb{H}^{p+q}\big(\iota_F^!\operatorname{IC}_{Y(V)}\big)
\and
\bigoplus_{p+q=m} E_\infty^{p,q} = \IH^m\!\big(Y(V); \Ql\big)\;\;\text{for all $m$ \cite[\S 3.4]{BGS96}}.$$
By Lemmas \ref{vec} and \ref{local} and Poincar\'e duality,
$$\mathbb{H}^{p+q}\big(\iota_F^!\operatorname{IC}_{Y(V)}\big)
\cong 
\IH^{p+q}_c\!\big(X(V^{F}); \Ql\big) \cong \IH^{p - q}\!\big(X(V^{F}); \Ql\big).$$
We know that $\IH^{p - q}\!\big(X(V^{F}); \Ql\big)$ vanishes unless $p-q$ is even \cite[Proposition 3.9]{EPW}.
This implies that the spectral sequence degenerates at the $E_1$ page, $\IH^m\!\big(Y(V); \Ql\big) = 0$
unless $m$ is even, and
$$\IH^{2i}\!\big(Y(V); \Ql\big) \cong \bigoplus_{p+q=2i}\bigoplus_{\crk F = p}\IH^{p - q}\!\big(X(V^{F}); \Ql\big)
= \bigoplus_F\IH^{2(\crk F-i)}\!\big(X(V^{F}); \Ql\big).$$
We now apply Poincar\'e duality for $\IH^*\!\big(Y(V); \Ql\big)$ to see that we can replace $i$ with $\rk M - i$, 
which has the effect of replacing $\crk F - i$ with $i-\rk F$.
Thus $$\IH^{2i}\!\big(Y(V); \Ql\big) \cong \bigoplus_F\IH^{2(i-\rk F)}\!\big(X(V^{F}); \Ql\big).$$
By Theorem \ref{p-geom}, $\dim \IH^{2(i-\rk F)}\!\big(X(V^{F}); \Ql\big) = c_{M(V^F)}(i-\rk F) = c_{M(V)^F}(i-\rk F)$, thus
\begin{eqnarray*}\sum_{i\geq 0}t^i\dim \IH^{2i}\!\big(Y(V); \Ql\big) &=& 
\sum_{i\geq 0}t^i\sum_Fc_{M(V)^F}(i-\rk F)\\ &=&
\sum_Ft^{\rk F}\sum_{i\geq 0}c_{M(V)^F}(i-\rk F) t^{i-\rk F}\\ &=& 
\sum_Ft^{\rk F} P_{M(V)^F}(t)\\ &=&
Z_{M(V)}(t).\end{eqnarray*}
The same argument works for topological intersection cohomology
when $k=\C$.
\end{proofz}
\vspace{\baselineskip}

\begin{remark}
Theorems \ref{p-geom} and \ref{z-geom} also hold equivariantly.  That is, if 
$\Gamma$ acts on $\cI$ in such a way so that $V\subset k^\cI$ is a subrepresentation, 
then $\Gamma$ acts on $M(V)$, $X(V)$, and $Y(V)$, and we have 
$$P^\Gamma_{M(V)}(t) \cong \bigoplus_{i\geq 0} t^i \IH^{2i}\!\big(X(V))\and
Z^\Gamma_{M(V)}(t) \cong \bigoplus_{i\geq 0} t^i \IH^{2i}\!\big(Y(V))$$
as graded representations of $\Gamma$.
This holds for $\ell$-adic intersection cohomology when $k$ is a finite field as well as 
for topological intersection cohomology when $k=\C$.

The first statement for $k=\C$ appears in \cite[Corollary 2.12]{GPY}; see also \cite[Theorem 3.1]{fs-braid}.
The finite field version can be proved similarly; the only technical point is that in the $k=\C$ case we argue
that the maps in a certain spectral sequence\footnote{Here we refer to the spectral sequence $\tilde E$ that appears
in \cite[Section 3]{PWY}.} must strictly preserve weights in the mixed Hodge filtration,
and in the finite field version we instead use the fact that these maps are equivariant for the action of the Frobenius
automorphism.  

Once we know the first statement, the proof of Theorem \ref{z-geom}
extends without modification to the equivariant setting, and the second statement is proved, as well.
\end{remark}

\begin{remark}
Consider the category $\cO(V)$ of perverse sheaves on $Y(V)$ that are smooth with respect to the stratification
described in this section.  This category has some very nice properties; 
see for example \cite[3.3.1]{BGS96} when $k=\C$ and \cite[4.4.4]{BGS96} when $k$ is a finite field.
In particular, $$P_{M(V)}(t) = \sum_{i\geq 0}t^i \dim \IH^{2i}\!\big(X(V); \Ql\big) = \sum_{i\geq 0}t^i \dim \IH_\infty^{2i}\!\big(Y(V); \Ql\big),$$
which in turn is given by the (backward) graded dimension of the Ext group from the skyscraper sheaf at the point $\infty$
to the IC sheaf of $Y(V)$.  Other Ext groups from standard objects to simple objects
are measured by Kazhdan-Lusztig polynomials of localizations of contractions of $M(V)$.
\end{remark}

\bibliography{./symplectic}
\bibliographystyle{amsalpha}

\end{document}